\journal{Constructive Approximation}
\numberwithin{equation}{section}
\newtheorem{theorem}{Theorem}[section]
\newtheorem{lemma}{Lemma}[section]
\newtheorem{de}{Definition}[section]
\newtheorem{proposition}{Proposition}[section]
\newtheorem{corollary}{Corollary}[section]
\newtheorem{remark}{Remark}[section]
\begin{document}

\begin{frontmatter}

\title{Asymptotic properties of biorthogonal
polynomials systems related to Hermite and Laguerre
polynomials}\tnotetext[mytitlenote]{This work is supported by the
Natural Science Foundation of China (11301060), China Postdoctoral
Science Foundation (2013M541234, 2014T70258) and Outstanding
Scientific Innovation Talents Program of DUFE (DUFE2014R20). }

\author{Yan Xu}
\address{School of Mathematics and Quantitative Economics, Center
for Econometric analysis and Forecasting, Dongbei University of
Finance and Economics, Liaoning, 116025, PR China}
\cortext[myauthor]{Email: yan\underline{\hbox to
1.5mm{}}xu@dufe.edu.cn}

\begin{abstract}
In this paper, the structures to a family of biorthogonal
polynomials that approximate to the Hermite and Generalized Laguerre
polynomials are discussed respectively. Therefore, the asymptotic
relation between several orthogonal polynomials and combinatorial
polynomials are derived from the systems, which in turn verify the
Askey scheme of hypergeometric orthogonal polynomials. As the
applications of these properties, the asymptotic representations of
the generalized ~Buchholz, ~Laguerre, ~Ultraspherical
  (Gegenbauer), Bernoulli, Euler, Meixner and Meixner-Pllaczekare polynomials are derived
  from the theorems directly. The relationship between Bernoulli and Euler polynomials
are shown as a special case of the characterization theorem of the
Appell sequence generated by $\alpha$ scaling functions.
\end{abstract}

\begin{keyword}
Hermite Polynomial\sep Laguerre Polynomial\sep Appell sequence \sep
Askey Scheme \sep B-splines\sep Bernoulli Polynomial \sep Euler
polynomials.
 \MSC[2010] 42C05 \sep 33C45 \sep 41A15\sep 11B68
\end{keyword}

\end{frontmatter}

\linenumbers

\section{Introduction}

The Hermite polynomials follow from the generating function
\begin{eqnarray}
e^{xz-\frac{z^2}{2}}=\sum_{m=0}^{\infty}\frac{H_m(x)}{m!}z^m, &&z\in
\mathbb{C}, x\in\mathbb{R}
\end{eqnarray}
which gives the Cauchy-type integral

\begin{eqnarray}
H_m(x)=\frac{m!}{2i\pi }\oint e^{xz-\frac{z^2}{2}}z^{-(m+1)}dz.
\end{eqnarray}
The derivatives of the Gaussian function,
$G(x)=\frac{1}{\sqrt{2\pi}}e^{-x^2/2}$, produce the Hermite
polynomials by the relation, $ (-1)^mG^{(m)}(x)=H_m(x)G(x),
m=0,1,\ldots.$ Therefore the orthonormal property of the Hermite
polynomials,

\begin{equation*}
\frac{1}{m!}\int_{-\infty}^{\infty}H_m(x)H_n(x)G(x)dx=\delta_{m,n},
\end{equation*}
can be considered as a biorthogonal relation between the derivatives
of the Gaussian function, $ \{(-1)^nG^{(n)}:n=0,1,\ldots\}$ and the
Hermite polynomials, $\{ \frac{H_m}{m!}:m=0,1,\ldots\}.$

The Hermite polynomials have been extensively studied since the
pioneer article of C. Hermite \cite{Hermite1864} in 1864. It has
many interesting properties and applications in several branches of
mathematics, physical sciences and engineering. A rich source of
orthogonal polynomials, for instance, Gegenbauer \cite{Temme1989},
Laguerre \cite{lagurre1}, Charlier\cite{lagurre2}, Jacobi
\cite{L_T_her}, Meixner-Pollaczek, Meixner, Krawtchouk and Hahn-type
polynomials \cite{Ferreira_Meixner} have asymptotic approximations
in terms of Hermite polynomials, which is known as the famouse Askey
scheme\cite{Temme_L_Askey01, Koekoek_Askey98, T_L2000}.  The
asymptotic relations among other hypergeometric orthogonal
polynomials and its q-analogue can be found in \cite{Ferreira,
Koekoek_Askey98}. The asymptotic representations of other families
of polynomials, such as the generalized Bernoulli, Euler, Bessel and
Buchholz polynomials are also considered\cite{Dominici,
L_T_Bernoulli, Wong_Bessel}.

In \cite{Lee}, S. L. Lee extended the biorthogonal properties
between the derivatives of the Gaussian function and Hermite
polynomials to a family of scaling functions with compact support
and a family of Appell sequences which approximate to the Gaussian
function and Hermite polynomials respectively. The Appell
polynomials are also called scaling biorthogonal polynomials which
are eigenfunctions of a linear operator and the distributional
derivatives of $\phi$ are the eigenfunctions of its adjoint
corresponding to the same eigenvalues \cite{05AMS}. In particular,
the Appell polynomials generated by the uniform \textit{B-spline}
are the classical Bernoulli polynomials which asymptotic approximate
to the Hermite polynomials by suitably normalized\cite{Lee}.

The main objectives of this paper is to extend these properties to a
family of non-scaling functions that approximate to the generating
functions and to construct a family of biorthogonal polynomials that
approximate to the Hermite polynomials and Laguerre polynomials
respectively. The asymptotic properties between Hermite, Laguerre
and other orthogonal polynomials which are known as Askey Scheme are
derived from these theorems as simple cases. The relationship
between the Appell sequence polynomials and the $\alpha$-scaling
compact support functions are also considered.

This paper is organized as follows: In section 2, we present the
framework of the biorthogonal polynomials system which related to
Hermite polynomials and Gaussian function. The characterization
theorem of the Appell sequence generated by the scaling functions
are also shown in this section. In section 3, as the applications of
senction 2, the asymptotic relations among Bernoulli polynomials,
Euler polynomials and B-splines are studied. The new identical
relation between Bernoulli and Euler polynomials are shown as a
special case. In section 4, we generalize the asymptotic
relationship to a family of hypergeometric orthogonal polynomials
related to Hermite polynomials. The asymptotic properties of
Generalized Buchholz polynomials and Ultraspherical (Gegenbauer)
polynomials are considered as the applications. In section 5, we
generalize the biorthogonal systems to the Laguerre polynomials and
derive several asymptotic properties in Askey scheme.

\section{Biorthogonal polynomials approximate to Hermite polynomial}

Let $C^\infty(\mathbb{R})$ denoted for the space of infinitely
differentiable functions. If $\phi:
C^\infty(\mathbb{R})\rightarrow\mathbb{R}$ is a linear functional,
we shall write $\langle \phi,\nu\rangle=\phi(\nu), \nu\in C^\infty
(\mathbb{R})$. The linear functional $\phi$ is continuous if and
only if there is a compact subset $K$ of $\mathbb{R}$, a constant $C
> 0$ and an integer $k\geq 0$ such that
\begin{equation*}
|\langle\phi,\nu\rangle|\leq C \max_{j\leq k}\sup_{x\in K}|\nu^{(j)}(x)|.
\end{equation*}
We denote the space of distributions with compact support by $\mathcal {E}^\prime(\mathbb{R}) $. Integrable
functions and measures with compact supports belong to $\mathcal {E}^\prime(\mathbb{R}) $. If $f$ is a compactly
supported integrable function then it is associated with the distribution, which we
still denote by $f$, defined by
\begin{eqnarray*}
\left\langle f, \nu\right\rangle:=\int_\mathbb{R}\nu(x)f(x)dx, && \nu\in C^\infty(\mathbb{R}).
\end{eqnarray*}
If $m$ is a compactly supported measure on $\mathbb{R}$, then it is
associated with the distribution, which we still denote by $m$,
defined by
\begin{eqnarray*}
\left\langle m,\nu\right\rangle:=\int_\mathbb{R}\nu(x)d m(x), && \nu \in C^\infty(\mathbb{R}).
\end{eqnarray*}
Any $\phi \in \mathcal {E}^\prime (\mathbb{R})$ has derivatives $\phi^{(n)}$ of any order n and they are defined by
\begin{eqnarray*}
\left\langle \phi^{(n)}, \nu\right\rangle=(-1)^n\left\langle \phi,
\nu^{(n)}\right\rangle, && n=0,1,\dots.
\end{eqnarray*}
Taking a compactly supported distribution $\phi\in\mathcal
{E}^\prime(\mathbb{R})$, then for any integer $n\geq0$,
\begin{equation*}
\left\langle \phi^{(n)},e^{(\cdot)z}\right\rangle=(-1)^n\left\langle
\phi,z^ne^{(\cdot)z} \right\rangle=(-1)^nz^n\widehat{\phi}(iz),
\end{equation*}
where $\widehat{\phi}(\cdot)$ denote the Fourier transform of
$\phi(x)$.

If $\widehat{\phi}(0)\neq0$,
\begin{equation}\label{z^n}
\left\langle (-1)^n\phi^{(n)}, \frac{e^{(\cdot)z}}{\widehat{\phi}(iz)}\right\rangle=z^n
\end{equation}
in a neighborhood of $0$. Since $\phi$ is compactly supported, $\widehat{\phi}$ is analytic. So we can
define a sequence of polynomials, $P_m$, by the generating function
\begin{equation}\label{pm}
\frac{e^{xz}}{\widehat{\phi}(iz)}=\sum_{m=0}^\infty \frac{P_m(x)}{m!}z^m.
\end{equation}
It follows from (\ref{z^n}) and (\ref{pm}) that for any integer $n\geq 0$,
\begin{equation}
z^n=\sum_{m=0}^\infty \left\langle(-1)^n\phi^{(n)},\frac{P_m(x)}{m!}\right\rangle z^m,
\end{equation}
which gives the biorthogonal relation
\begin{equation}\label{orth}
\left\langle(-1)^n\phi^{(n)},\frac{P_m(x)}{m!}\right\rangle=\delta_{m,n}.
\end{equation}

\begin{de}
A sequence of polynomials, $\{P_m(x): m \in N\}$, is an Appell
sequence if $P_m(x)$ is a polynomial of degree $m$ and
$$ P'_m(x)=mP_{m-1}.$$
\end{de}

Differentiating (\ref{pm}) with respect to $x$ and equating coefficients of $z^m$ in the resulting equation gives
\begin{eqnarray}
P'_m(x)=mP_{m-1}(x), && m=1,2,\ldots,
\end{eqnarray}
which implies $P_m(x)$ are Appell sequence of polynomials. Therefore
the distribution $\phi$ generates an Appell sequence of polynomials
by the generating function $\frac{e^{xz}}{\widehat{\phi}(iz)}$.

We consider a family of sequences of biorthogonal polynomials,
$\{P_{N,m}:m=0,1,\ldots, N=1.2\ldots\}$, that are generated by a
sequence of functions, $\phi_{N}$, which converges to the Gaussian
function
\begin{eqnarray}
\frac{e^{xz}}{\hat{\phi}_N(iz)}=\sum_{m=0}^\infty
\frac{P_{N,m}(x)}{m!}z^m.
\end{eqnarray}

\begin{de}
 Let $ \tilde{\phi}_N$ be the standardized form of $\phi_N$,
 \begin{equation*}
 \tilde{\phi}_N(x)=\sigma_N \phi_N(\sigma_N x+\mu_N),
 \end{equation*}
where $\mu_N$ and $\sigma^2_N$ are the mean and variance of
$\phi_N$. Define the standardized form of the biorthonormal
polynomials, $\tilde{P}_{N,m}$, of $\{P_{N,m}: m = 0, 1, \ldots\}$
by
\begin{equation}
\tilde{P}_{N,m}(x) =\sigma_{N}^{-m} P_{N,m}(\sigma_N x + \mu_N).
\end{equation}
Then the following biorthogonal relations for the standardized
biorthogonal polynomials follow from (\ref{orth}):
\begin{equation*}
 \left\langle  (-1)^n \tilde{\phi}_N^{(n)},
 \tilde{P}_{N,m}\right\rangle=\delta_{m,n},  \forall
 m,n \geq 0.
 \end{equation*}
 \end{de}
Further, the generating functions of $ \tilde{P}_{N,m} $ are given
by\cite{Lee}
\begin{equation}\label{sorth}
\frac{e^{xz}}{\widehat{\tilde{\phi}}_N(iz)}=\sum_{m=0}^\infty
\frac{\tilde{P}_{N,m}(x)}{m!}z^m.
\end{equation}

\begin{theorem}\label{Th2}
Let $\tilde{\phi}_N(x)$ satisfy the following conditions:

(1) There exist constant $r>0$ such that for any $\varepsilon$,
there is a sufficient large $N_0$, for any $N>N_0$, it holds
\begin{eqnarray}\label{abs1}
\left| \widehat{\tilde{\phi}}_N(iz)-e^{\frac{z^2}{2}}\right|\leq \varepsilon, && |z|<r.
\end{eqnarray}

(2) Let $\{\tilde{P}_{N,m}(x): m=0,1,\ldots\} $ be the biorthogonal
polynomials generated by the functions, $\tilde{\phi}_N$, as in
(\ref{sorth}).

Then for each $m=0,1,\ldots$, $\tilde{P}_{N,m}(x)$ converges locally uniformly to the Hermit polynomial $H_m(x)$ as $N$ goes to infinity.
\end{theorem}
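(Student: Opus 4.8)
The plan is to read off both families of polynomials as Taylor coefficients of their respective generating functions and then invoke the principle that uniform convergence of holomorphic functions transfers to their Taylor coefficients. Indeed, the Hermite polynomials are the coefficients of $e^{xz-z^2/2}=e^{xz}/e^{z^2/2}$, whereas $\tilde P_{N,m}/m!$ is the coefficient of $z^m$ in $e^{xz}/\widehat{\tilde\phi}_N(iz)$ by (\ref{sorth}); condition (1) says precisely that the two denominators converge uniformly on a disk, so one expects the coefficients to converge as well. Concretely I would fix a radius $0<\rho<r$ and, for all $N$ large enough that the integrand is holomorphic on the closed disk, write the coefficients by Cauchy's formula
\[
\frac{\tilde P_{N,m}(x)}{m!}=\frac{1}{2\pi i}\oint_{|z|=\rho}\frac{e^{xz}}{\widehat{\tilde\phi}_N(iz)}\,z^{-(m+1)}\,dz,\qquad
\frac{H_m(x)}{m!}=\frac{1}{2\pi i}\oint_{|z|=\rho}e^{xz-z^2/2}\,z^{-(m+1)}\,dz,
\]
and subtract, so that the integrand of $\tilde P_{N,m}(x)-H_m(x)$ carries the factor $\bigl(\widehat{\tilde\phi}_N(iz)\bigr)^{-1}-e^{-z^2/2}$.

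The step needing genuine care — and the one I regard as the main obstacle — is justifying that $e^{xz}/\widehat{\tilde\phi}_N(iz)$ is holomorphic on a neighborhood of $\{|z|\le\rho\}$, i.e. that the denominator does not vanish there, so that both the Cauchy representation and the reciprocal estimate are legitimate. Since $\tilde\phi_N$ has compact support its Fourier transform is entire, so the only danger is a zero of $\widehat{\tilde\phi}_N(iz)$. Here I would use that $|e^{z^2/2}|=e^{\operatorname{Re}(z^2)/2}\ge e^{-\rho^2/2}=:c_0>0$ throughout the closed disk $|z|\le\rho$; combining this with (1), a choice of $\varepsilon<c_0/2$ and correspondingly large $N$ forces $|\widehat{\tilde\phi}_N(iz)|\ge c_0-\varepsilon\ge c_0/2$ on the whole disk. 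This uniform lower bound is exactly what converts convergence of $\widehat{\tilde\phi}_N(iz)$ into convergence of its reciprocal.

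With the denominator controlled, the remaining estimate is routine. On $|z|=\rho$ one bounds
\[
\left|\frac{1}{\widehat{\tilde\phi}_N(iz)}-e^{-z^2/2}\right|
=\frac{\bigl|e^{z^2/2}-\widehat{\tilde\phi}_N(iz)\bigr|}{\bigl|\widehat{\tilde\phi}_N(iz)\bigr|\,\bigl|e^{z^2/2}\bigr|}
\le\frac{2\varepsilon}{c_0^{2}},
\]
using (1) in the numerator and the two lower bounds in the denominator. Inserting this into the difference of the two Cauchy integrals, together with $|e^{xz}|\le e^{\rho M}$ for $|x|\le M$, $|z^{-(m+1)}|=\rho^{-(m+1)}$, and estimating the contour integral by its arclength $2\pi\rho$, I would obtain
\[
\sup_{|x|\le M}\bigl|\tilde P_{N,m}(x)-H_m(x)\bigr|\le\frac{m!\,e^{\rho M}}{\rho^{m}}\cdot\frac{2\varepsilon}{c_0^{2}}.
\]
For each fixed $m$ and each compact set $\{|x|\le M\}$ the right-hand side tends to $0$ as $\varepsilon\to0$, hence as $N\to\infty$, which is precisely the asserted locally uniform convergence of $\tilde P_{N,m}$ to $H_m$. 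No deeper difficulty is anticipated beyond the uniform nonvanishing of the denominator treated above.
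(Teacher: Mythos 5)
Your proposal is correct and follows essentially the same route as the paper: represent $\tilde P_{N,m}(x)-H_m(x)$ as a Cauchy integral of the difference of generating functions over a circle of radius $\rho<r$, bound the numerator by condition (1), and control the denominator by a uniform lower bound. If anything, your argument is slightly cleaner, since you derive the lower bound $|\widehat{\tilde\phi}_N(iz)|\ge c_0/2$ directly from condition (1) and work with the stated $\varepsilon$ throughout, whereas the paper's proof switches midstream to an unstated quantitative bound of the form $A/\sigma_N$ with $\sigma_N\to\infty$.
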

\begin{proof}

Since $\widehat{\tilde{\phi}}_N(0)=1$, we can choose a neighborhood
$U$ of the origin so that
$\left|\widehat{\tilde{\phi}}(iz)\right|\geq\frac{1}{2}$ and $\left|
e^{\frac{z^2}{2}}\right|\geq \frac{1}{2}$ for all $z\in U$. Take a
circle $C$ inside $U$ with center at $0$ and radius $r$ so that
(\ref{abs1}) is satisfied.

Noting
\begin{equation}\label{taylor}
\frac{ e^{xz}}{\widehat{\tilde{\phi}}_N(iz)}-\frac{e^{xz}}{e^{\frac{z^2}{2}}}=\sum_{m=0}^\infty\frac{\tilde{P}_{N,m}(x)-H_m(x)}{m!}z^m.
\end{equation}

The coefficients of the Taylor series (\ref{taylor}) are represented
by the Cauchy's integral formula
\begin{equation*}
\tilde{P}_{N,m}(x)-H_m(x)=\frac{m!}{2\pi i}\oint_C
\frac{e^{xz}(e^{\frac{z^2}{2}}-\widehat{\tilde{\phi}}_N(iz))}{z^{m+1}\widehat{\tilde{\phi}}_N(iz)e^{\frac{z^2}{2}}}dz.
\end{equation*}
Noting  $\tilde{\phi}_N(x)$ satisfy the condition (1), which means
$\exists$ $ r>0$, for a real number $A>0$, there is a sufficient
large $N_0$, for any $N> N_0$, it holds
\begin{eqnarray}
 \left|
\widehat{\tilde{\phi}}_N(iz)-e^{\frac{z^2}{2}}\right|\leq\frac{A}{\sigma_N},
&& |z|<r.
\end{eqnarray}

 Therefore
\begin{eqnarray*}
\left|\tilde{P}_{N,m}(x)-H_m(x)\right|&&\leq\frac{m!}{2\pi }
\oint_C\frac{\left|e^{xz}\right|\left|e^{\frac{z^2}{2}}-\widehat{\tilde{\phi}}_N(iz)
\right|}{r^{m+1}\left|\widehat{\tilde{\phi}}_N(iz)\right|
\left|
e^{\frac{z^2}{2}}\right|}|dz|\\
&&\leq\frac{m!}{2\pi }\oint_C\frac{e^{xRe(z)}\frac{A}{\sigma_N}}{r^{m+1}\left|\widehat{\tilde{\phi}}_N(iz)\right|
\left|
e^{\frac{z^2}{2}}\right|}|dz|\\
&&\leq\frac{4(m!)e^{rx}A}{\sigma_N r^m}
\end{eqnarray*}
Since $\sigma_N\rightarrow\infty$ as $N\rightarrow \infty$, It follows that for each $m$, $\tilde{P}_{N,m(x)}\rightarrow H_m(x)$ uniformly on compact sets.
\end{proof}

\subsection{Appell sequence generated by scaling functions}
The refinement equation
\begin{eqnarray}\label{scale phi}
\phi_n(x)=\int_\mathbb{R} \alpha \phi_n(\alpha x -y)dm_n(y), && x\in
\mathbb{R},  n=1,2,\ldots,
\end{eqnarray}
where $\alpha > 1 $ and $\{m_n\}$ is a sequence of probability
measures with finite first and second moments. Equivalently,
(\ref{scale phi}) can be expressed in term of Fourier transforms in
the frequency domain in the form
\begin{eqnarray}\label{FT}
\widehat{\phi}_n(\mu)=\widehat{m}_n\left(\frac{\mu}{\alpha}\right)\widehat{\phi}_n\left( \frac{\mu}{\alpha}\right), && \mu \in \mathbb{R}.
\end{eqnarray}
\begin{theorem}\label{2.2}
If two Appell sequence polynomials, $P_m(x)$ and $Q_m(x)$ are generalized by
\begin{equation}\label{mask}
\frac{e^{xz}}{\widehat{\psi}(iz)}=\sum_{m=0}^{\infty}
P_m(x)\frac{z^m}{m!},
\end{equation}
and
\begin{equation}\label{phi}
\frac{e^{xz}}{\widehat{\phi}(iz)}=\sum_{m=0}^{\infty}
Q_m(x)\frac{z^m}{m!}
\end{equation}
respectively. Then  $\phi(x)$ is a $\alpha$-scaling compact
supported function with mask $\psi(x)$ if and only if
\begin{equation}\label{scale}
\sum_{k=0}^{m}\alpha^{-m}\binom {m}{k}P_k\left(\alpha x\right)Q_{m-k}\left({\alpha x}\right)=Q_m(2x).
\end{equation}

\end{theorem}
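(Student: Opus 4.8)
The plan is to turn the polynomial identity (\ref{scale}) into an identity between the Fourier transforms $\widehat{\phi}$ and $\widehat{\psi}$, and then to recognize that identity as the frequency-domain refinement relation (\ref{FT}) with mask $\psi$. The bridge is the observation that the left-hand side of (\ref{scale}) is a binomial (Cauchy) convolution of the two Appell sequences, so it should come from multiplying the two generating functions.

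First I would multiply the generating functions (\ref{mask}) and (\ref{phi}). Since
$$\frac{e^{xw}}{\widehat{\psi}(iw)}\cdot\frac{e^{xw}}{\widehat{\phi}(iw)}=\frac{e^{2xw}}{\widehat{\psi}(iw)\widehat{\phi}(iw)},$$
the Cauchy product of the right-hand sides gives the master identity
$$\frac{e^{2xw}}{\widehat{\psi}(iw)\widehat{\phi}(iw)}=\sum_{m=0}^{\infty}\left(\sum_{k=0}^{m}\binom{m}{k}P_k(x)Q_{m-k}(x)\right)\frac{w^m}{m!}.$$
Next I would absorb the factor $\alpha^{-m}$ and the dilation $\alpha x$ of (\ref{scale}) by the substitutions $x\mapsto\alpha x$ and $w\mapsto z/\alpha$. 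Because $(z/\alpha)^m=\alpha^{-m}z^m$ and $2\alpha x\cdot(z/\alpha)=2xz$, this yields exactly the generating function of the left-hand side of (\ref{scale}),
$$\frac{e^{2xz}}{\widehat{\psi}(iz/\alpha)\widehat{\phi}(iz/\alpha)}=\sum_{m=0}^{\infty}\left(\sum_{k=0}^{m}\alpha^{-m}\binom{m}{k}P_k(\alpha x)Q_{m-k}(\alpha x)\right)\frac{z^m}{m!}.$$
On the other side, replacing $x$ by $2x$ in (\ref{phi}) gives the generating function of the right-hand side, namely $\sum_{m}Q_m(2x)z^m/m!=e^{2xz}/\widehat{\phi}(iz)$.

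Comparing coefficients of $z^m$, the identity (\ref{scale}) holds for every $m$ if and only if these two generating functions agree near $z=0$; cancelling the common factor $e^{2xz}$, this is equivalent to
$$\widehat{\phi}(iz)=\widehat{\psi}(iz/\alpha)\,\widehat{\phi}(iz/\alpha)$$
in a neighborhood of the origin. Writing $\mu=iz$, this is precisely (\ref{FT}) with $\widehat{m}=\widehat{\psi}$, i.e. the assertion that $\phi$ is $\alpha$-scaling with mask $\psi$. I expect the main technical point to be the passage from \emph{holds near $0$} to \emph{holds everywhere}: since $\phi$ and $\psi$ have compact support, $\widehat{\phi}$ and $\widehat{\psi}$ are entire, so an identity valid on a neighborhood of $0$ extends to all of $\mathbb{C}$ by analytic continuation, which is what makes the equivalence a genuine if-and-only-if. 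A secondary point is to confirm that $\widehat{\phi}(0)=\widehat{\psi}(0)\neq 0$, so that the generating functions are well defined near the origin and cancelling the denominators is legitimate.
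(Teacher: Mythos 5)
Your proposal is correct and follows essentially the same route as the paper: form the Cauchy product of the two generating functions, rescale $x\mapsto\alpha x$ and $z\mapsto z/\alpha$ to produce the binomial convolution on the left of (\ref{scale}), and identify the resulting identity of generating functions with the frequency-domain refinement relation $\widehat{\phi}(iz)=\widehat{\psi}(iz/\alpha)\widehat{\phi}(iz/\alpha)$. Your added remarks on analytic continuation from a neighborhood of $0$ and on $\widehat{\phi}(0)=\widehat{\psi}(0)\neq 0$ are sensible points that the paper leaves implicit, but they do not change the argument.
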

\begin{proof}
Since
\begin{equation*}
\frac{e^{\frac{xz}{\alpha}}}{\widehat{\psi}(\frac{iz}{\alpha})}=\sum_{m=0}^{\infty}
\alpha^{-m}P_m (x)\frac{z^m}{m!},\space\,\space\
\frac{e^{\frac{xz}{\alpha}}}{\widehat{\phi}(\frac{iz}{\alpha})}=\sum_{m=0}^{\infty}
\alpha^{-m}Q_m(x)\frac{z^m}{m!},
\end{equation*}
then
\begin{eqnarray}\label{1}
\frac{e^{\frac{2xz}{\alpha}}}{\widehat{\psi}(\frac{iz}{\alpha})
\widehat{\phi}(\frac{iz}{\alpha})}&=&\left(\sum_{j=0}^{\infty}
\alpha^{-m}P_m(x)\frac{z^m}{m!}\right)\left(\sum_{m=0}^{\infty} \alpha^{-m}Q_m(x)\frac{z^m}{m!}\right)\\
&=&\sum_{m=0}^\infty\left(\sum_{k=0}^m
\alpha^{-m}\binom{m}{k}P_k(x)Q_{m-k}(x)\right)\frac{z^m}{m!}.
\end{eqnarray}
Suppose that the Appell sequence polynomials, $P_m(x)$ and $Q_m(x)$,
satisfy (\ref{scale}),
\begin{equation*}
\sum_{k=0}^{m}\alpha^{-m}\binom {m}{k}P_k\left(\alpha x\right)Q_{m-k}\left({\alpha x}\right)=Q_m(2x).
\end{equation*}
Then
\begin{equation}\label{scale2}
\sum_{m=0}^\infty\left(\sum_{k=0}^{m}\alpha^{-m}\binom{m}{k}P_k\left( \frac{\alpha x}{2}\right)Q_{m-k}\left( \frac{\alpha x}{2}\right)\right)\frac{z^m}{m!}=\sum_{m=0}^\infty Q_m(x)\frac{z^m}{m!}.
\end{equation}
Therefore
\begin{eqnarray}\label{1}
\frac{e^{xz}}{\widehat{\psi}(\frac{iz}{\alpha})\widehat{\phi}(\frac{iz}{\alpha})}
&=&\sum_{m=0}^\infty\left(\sum_{k=0}^{m}\alpha^{-m}\binom{m}{k}P_k\left( \frac{\alpha x}{2}\right)Q_{m-k}\left( \frac{\alpha x}{2}\right)\right)\frac{z^m}{m!}\\
&=&\sum_{m=0}^\infty Q_m(x)\frac{z^m}{m!}=\frac{e^{xz}}{\widehat{\phi}(iz)}.
\end{eqnarray}
We see that
\begin{eqnarray*}
\widehat{\psi}(\frac{iz}{\alpha})\widehat{\phi}(\frac{iz}{\alpha})
=\widehat{\phi}(iz),
\end{eqnarray*}
which imply $\phi(x)$ is scaling function with mask $\psi(x)$.

Suppose that $\phi(x)$ is a $\alpha$-scaling compact supported function with mask $\psi(x)$, satisfying the scaling equation (\ref{scale phi}). Then the Fourier transform $\widehat{\phi}$ is given in (\ref{FT}) shows that
\begin{equation}
\widehat{\phi}\left(\frac{iz}{\alpha}\right)\widehat{\phi}\left(\frac{iz}{\alpha}\right)=\widehat{\phi}(iz).
\end{equation}
It follows that
\begin{eqnarray*}\label{1}
\sum_{m=0}^\infty
Q_m(2x)\frac{z^m}{m!}&=&\frac{e^{2xz}}{\widehat{\phi}(iz)}
=\frac{e^{2xz}}{\widehat{\psi}(\frac{iz}{\alpha})\widehat{\phi}(\frac{iz}{\alpha})}\\
&=&\left(\sum_{m=0}^{\infty} \alpha^{-m}P_m\left(\alpha
x\right)\frac{z^m}{m!}\right)\left(\sum_{m=0}^{\infty}
 \alpha^{-m}Q_m\left( \alpha x\right)\frac{z^m}{m!}\right)\\
&=&\sum_{m=0}^\infty\left(\sum_{k=0}^{m}\alpha^{-m}\binom{m}{k}P_k\left(
\alpha x\right)Q_{m-k}\left(\alpha x\right)\right)\frac{z^m}{m!}.
\end{eqnarray*}
Therefore
\begin{equation*}
\sum_{k=0}^{m}\alpha^{-m}\binom{m}{k}P_k\left(\alpha x\right)Q_{m-k}\left({\alpha x}\right)=Q_m(2x).
\end{equation*}
\end{proof}

\section{Generalized Bernoulli polynomials, Euler Polynomials and
B-splines}

{\it  B-splines with order $N$}, which is denoted as $B_N(\cdot)$,
is defined by the induction as
$$
B_1(x)\,\,=\,\,\begin{cases}
1 & \text{if} \>\> x\in[0,1),\\
0 & \text {otherwise}
\end{cases}
$$
and for $N\geq 1$
$$
B_N\,\,=\,\, B_1*B_{N-1},
$$
where $*$ denotes the operation of convolution which is defined by
$$
( f\ast g)(t) := \int_{-\infty}^{+\infty} f (t -y)g(y)dy,
$$
for $f $ and $g$ in $L^2(\mathbb{R})$.

The Fourier transform of $B_N(x)$ is
\begin{eqnarray*}
\widehat{B}_N(\omega)=\left(
\frac{1-e^{-i\omega}}{i\omega}\right)^N, && \omega\in \mathbb{R}.
\end{eqnarray*}
$B_N(x)$ also satisfies the scaling function as follow:
\begin{equation}\label{scale B}
B_N(x)=2\sum_{j=0}^N \frac{1}{2^N}\binom {N}{j}B_N(2x-j),
\end{equation}
where the mask $\psi_N(k):=\frac{1}{2^n}\binom{n}{k}$. Equivalently,
(\ref{scale B}) can be expressed in term of Fourier transforms in
the frequency domain in the form:
\begin{eqnarray*}
\widehat{B}_N(\omega)&= &\widehat{\psi}_N
(\omega/2)\widehat{B}_N(\omega/2),
\end{eqnarray*}
where the Fourier transform of the mask is $\widehat{\psi}_N
(\omega)=\left( \frac{1+e^{-i\omega}}{2}\right)^N .$

The asymptotic properties of B-splines have a long history going
back to the physicist Arnold Sommerfeld who showed that Gaussian
function can be approximated by B-splines point-wise in 1904
\cite{somer}. In 1992, Unser and his colleagues \cite{Unser} proved
that the sequence of normalized and scaled B-splines tends to
Gaussian function in $L^p$ space as the order $N$ increases. L. H.
Y. Chen, T. N. T. Goodman and S. L. Lee\cite{Goodman} considered the
convergence orders of scaling functions which asymptotic to
normality. A result due to Ralph Brinks \cite{ralph} generalized
Unser's result to the derivatives of the B-splines. Yan Xu and R. H.
Wang \cite{YW} gave the convergence orders of the approximation
processes and showed the asymptotic relationship
 among B-splines, Eulerian numbers and Hermite polynomials.

\begin{theorem}\cite{YW}\label{limtB} Let be $k\in
\mathbb{N}$, for $N> k+2$, the sequence of the $k$-th derivatives,
$B^{(k)}_{N}$, of the $B$-spline converges to the $k$-th derivative
of the Gaussian function
\begin{equation}
\left(\frac{N}{12}\right)^\frac{k+1}{2}B^{(k)}_{N}\left(\sqrt{\frac{N}{12}}x+\frac{N}{2}\right)
=\frac{1}{\sqrt{2\pi}} D^k\exp\left({-\frac{x^2}{2}}\right)
+O\left(\frac{1}{N}\right),
\end{equation}
and
\begin{equation}
\lim_{d\rightarrow\infty}\left\{\left(\frac{N}{12}\right)^\frac{k+1}{2}B^{(k)}_{N}
\left(\sqrt{\frac{N}{12}}x+\frac{N}{2}\right)\right\}
=\frac{(-1)^k}{\sqrt{2\pi}}H_k(x)G(x),
\end{equation}
where the limit may be taken point-wise or in $L^p(\mathbb{R}), p\in
[2,\infty)$.
\end{theorem}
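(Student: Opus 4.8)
The plan is to prove both assertions by Fourier analysis, exploiting that the $B$-spline is an $N$-fold convolution, so that $\widehat{B}_N=(\widehat{B}_1)^N$ and the rescaled derivative is governed by the $N$-th power of a single elementary characteristic function. Writing $\sigma_N=\sqrt{N/12}$ and
\begin{equation*}
F_{N,k}(x):=\left(\tfrac{N}{12}\right)^{\frac{k+1}{2}}B_N^{(k)}\!\left(\sigma_N x+\tfrac{N}{2}\right)=\sigma_N^{k+1}B_N^{(k)}\!\left(\sigma_N x+\tfrac{N}{2}\right),
\end{equation*}
I would first represent $F_{N,k}$ by Fourier inversion: since differentiation in space is multiplication by $i\omega$ on the transform side and $\widehat{B}_N=(\widehat{B}_1)^N$, the substitution $\omega=t/\sigma_N$ gives
\begin{equation*}
F_{N,k}(x)=\frac{1}{2\pi}\int_{\mathbb{R}}(it)^k\,\Psi_N(t)\,e^{itx}\,dt,\qquad \Psi_N(t)=\Big(\widehat{B}_1(t/\sigma_N)\,e^{it/(2\sigma_N)}\Big)^{N}.
\end{equation*}
The phase $e^{it/(2\sigma_N)}$ recenters by the mean $N/2$ and collapses the single factor to $\widehat{B}_1(\omega)e^{i\omega/2}=\sin(\omega/2)/(\omega/2)$, so that $\Psi_N(t)=\big(\operatorname{sinc}(t/(2\sigma_N))\big)^N$ with $\operatorname{sinc}(u)=\sin u/u$. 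The hypothesis $N>k+2$ ensures both the classical existence and continuity of $B_N^{(k)}$ and the absolute convergence of this integral, since $|t|^k|\Psi_N(t)|\le (2\sigma_N)^N|t|^{k-N}$ for $|t|\ge 2\sigma_N$.

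Next I would Taylor-expand. With $u=t/(2\sigma_N)=t\sqrt{3/N}$ and the classical expansion $\log(\sin u/u)=-u^2/6-u^4/180+O(u^6)$, and using $Nu^2=3t^2$, one finds
\begin{equation*}
\log\Psi_N(t)=N\log\frac{\sin u}{u}=-\frac{t^2}{2}-\frac{t^4}{20N}+O\!\left(\frac{t^6}{N^2}\right).
\end{equation*}
Hence for $|t|\le N^{1/8}$ the expansion yields $\Psi_N(t)=e^{-t^2/2}\big(1+O(t^4/N)\big)$, while on the larger range $|t|\le 2\pi\sigma_N$ the negativity of every term of $\log(\sin u/u)$ gives the clean one-sided bound $0\le\Psi_N(t)\le e^{-t^2/2}$. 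I would then estimate
\begin{equation*}
F_{N,k}(x)-G^{(k)}(x)=\frac{1}{2\pi}\int_{\mathbb{R}}(it)^k\big(\Psi_N(t)-e^{-t^2/2}\big)e^{itx}\,dt
\end{equation*}
by splitting $\mathbb{R}$ into the three zones $|t|\le N^{1/8}$, $N^{1/8}<|t|\le 2\pi\sigma_N$, and $|t|>2\pi\sigma_N$. On the first the integrand is $O(|t|^{k+4}e^{-t^2/2}/N)$, contributing $O(1/N)$; on the second the Gaussian tail beyond $N^{1/8}$ is smaller than any power of $1/N$; on the third the bound $|\Psi_N(t)|\le(2\sigma_N/|t|)^N$ together with the Gaussian tail is super-polynomially small. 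Combining, $F_{N,k}(x)=G^{(k)}(x)+O(1/N)$ uniformly in $x$, and since $\tfrac{1}{2\pi}\int(it)^k e^{-t^2/2}e^{itx}\,dt=\tfrac{d^k}{dx^k}\big(\tfrac1{\sqrt{2\pi}}e^{-x^2/2}\big)=\tfrac{1}{\sqrt{2\pi}}D^k e^{-x^2/2}$, this is exactly the first displayed identity.

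The pointwise limit then follows from the Rodrigues-type relation $(-1)^kG^{(k)}(x)=H_k(x)G(x)$ recorded in the Introduction, which rewrites the main term as $\tfrac{(-1)^k}{\sqrt{2\pi}}H_k(x)G(x)$. For the $L^p$ statement with $p\in[2,\infty)$, I would invoke the Hausdorff--Young inequality with conjugate exponent $p'\in(1,2]$: as the Fourier transform of $F_{N,k}-G^{(k)}$ is $(it)^k(\Psi_N(t)-e^{-t^2/2})$, one gets $\|F_{N,k}-G^{(k)}\|_{L^p}\le C\,\big\||t|^k(\Psi_N-e^{-t^2/2})\big\|_{L^{p'}}$, and the same three-zone analysis, now with $p'$-th powers, drives the right-hand side to $0$. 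The \emph{main obstacle} is precisely this uniform tail control: the naive expansion $\Psi_N\approx e^{-t^2/2}(1-t^4/(20N))$ is valid only for $|t|=o(N^{1/4})$, so one must rely instead on the one-sided bound $\Psi_N\le e^{-t^2/2}$ on $|t|\le 2\pi\sigma_N$ and on the power decay $(2\sigma_N/|t|)^N$ beyond it to show that everything outside the innermost zone is negligible against $1/N$; the hypothesis $N>k+2$ is exactly what makes these tail integrals converge.
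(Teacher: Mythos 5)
The paper itself gives no proof of this theorem; it is quoted verbatim from the cited reference [YW], so there is nothing internal to compare against. Your argument --- Fourier inversion of the recentred, rescaled derivative, the identity $\Psi_N(t)=\bigl(\mathrm{sinc}(t/(2\sigma_N))\bigr)^N$, the expansion $\log\Psi_N(t)=-t^2/2-t^4/(20N)+O(t^6/N^2)$, and the three-zone tail control using $0\le\Psi_N\le e^{-t^2/2}$ on $|t|\le 2\pi\sigma_N$ and the decay $(2\sigma_N/|t|)^N$ beyond (which is where $N>k+2$ is used), followed by Hausdorff--Young for the $L^p$ statement --- is correct and is essentially the standard route taken in the cited sources (Unser et al., Brinks, Xu--Wang).
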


 Generalized
Bernoulli\cite{Lundell, Weinmann, Srivastava} and Euler
polynomials\cite{Todorov85, Todorov93} of degree $m$, order $N$ and
complex argument $z$, denoted respectively by $B_m^N(z)$ and
$E_m^N(z)$ can be defined by their generating functions,
\begin{eqnarray}\label{defE}
\frac{\omega^N e^{\omega z}}{(e^\omega-1)^N}=\sum_{m=0}^\infty \frac{B_m^N(z)}{m!}\omega^n,&   & |\omega|<2\pi,\\
\frac{2^N e^{\omega z}}{(e^\omega+1)^N}=\sum_{m=0}^\infty \frac{E_m^N(z)}{m!}\omega^n,&   & |\omega|<\pi.\label{defB}
\end{eqnarray}

In paper \cite{Lee}, S. L. Lee has proved that the Appell
polynomials generated by the uniform \textit{B-spline} of order $N$
are the generalized Bernoulli polynomials of order $N$ and when
suitably normalized they converge to the Hermit polynomials as
$N\rightarrow \infty$. Since the B-splines approximate the Gaussian
function\cite{Goodman, Unser,ralph, Lee, YW}, they can also be used
as a filter in place of the Gaussian filter for linear
scale-space\cite{wl}.
\begin{corollary}\cite{Lee}
\begin{equation*}
\lim_{N\rightarrow\infty}\left( \frac{12}{N}\right)^{\frac{m}{2}}B_m^{N}\left(\sqrt{\frac{N}{12}}z+\frac{N}{2}\right)=H_m(z).
\end{equation*}
\end{corollary}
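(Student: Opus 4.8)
The plan is to exhibit the stated limit as a direct instance of Theorem~\ref{Th2}, with the role of $\phi_N$ played by the B-spline $B_N$. First I would record that $B_N$ is precisely the probability density of a sum of $N$ independent random variables uniformly distributed on $[0,1)$; consequently it is a compactly supported probability measure with mean $\mu_N=N/2$ and variance $\sigma_N^2=N/12$, so that $\sigma_N=\sqrt{N/12}$. This fixes the normalization constants that enter the standardization $\tilde P_{N,m}(x)=\sigma_N^{-m}P_{N,m}(\sigma_N x+\mu_N)$ introduced above.

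Next I would identify the Appell sequence generated by $B_N$. Using the convention $\langle\phi,e^{(\cdot)z}\rangle=\widehat{\phi}(iz)$ together with the stated transform of the B-spline, one computes $\widehat{B}_N(iz)=\big((e^{z}-1)/z\big)^{N}$, so that the generating function $e^{xz}/\widehat{B}_N(iz)=z^N e^{xz}/(e^{z}-1)^N$ coincides, after renaming the variables, with (\ref{defE}). Hence the polynomials $P_{N,m}$ generated by $B_N$ are exactly the generalized Bernoulli polynomials $B_m^N$. Inserting the moments above into the standardization gives
\[
\tilde P_{N,m}(z)=\sigma_N^{-m}P_{N,m}(\sigma_N z+\mu_N)=\Big(\tfrac{12}{N}\Big)^{m/2}B_m^N\Big(\sqrt{\tfrac{N}{12}}\,z+\tfrac{N}{2}\Big),
\]
which is precisely the quantity whose limit is asserted. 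It therefore suffices to verify hypothesis (1) of Theorem~\ref{Th2} for $\tilde\phi_N$.

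The core of the argument is this verification. From $\tilde\phi_N(x)=\sigma_N\phi_N(\sigma_N x+\mu_N)$ one obtains $\widehat{\tilde\phi}_N(iz)=e^{-z\mu_N/\sigma_N}\,\widehat{B}_N(iz/\sigma_N)$, and substituting the explicit transform yields, with $h:=1/\sigma_N=\sqrt{12/N}$,
\[
\log\widehat{\tilde\phi}_N(iz)=-z\mu_N h+N\log\frac{e^{zh}-1}{zh}.
\]
Expanding $\log\big((e^{w}-1)/w\big)=\tfrac{w}{2}+\tfrac{w^2}{24}+O(w^3)$ with $w=zh$, the linear term contributes $N\cdot\tfrac{zh}{2}=\tfrac12 z(Nh)$, which cancels exactly against $-z\mu_N h=-\tfrac12 z(Nh)$ because $\mu_N=N/2$; the quadratic term contributes $N\cdot\tfrac{z^2h^2}{24}=\tfrac{z^2}{24}(Nh^2)=\tfrac{z^2}{2}$ because $Nh^2=12$; and the remainder is $O(Nh^3)=O(N^{-1/2})$ uniformly for $z$ in a fixed disk. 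Thus $\widehat{\tilde\phi}_N(iz)\to e^{z^2/2}$ uniformly on compact sets, so condition (\ref{abs1}) is met, and Theorem~\ref{Th2} delivers the locally uniform convergence of $\tilde P_{N,m}$ to $H_m$, i.e.\ the asserted limit.

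I expect the delicate point to be exactly the cancellation in the last paragraph: this is the analytic (moment-generating-function) form of the central limit theorem specialized to the B-spline, and it succeeds only because the shift $\mu_N$ and scale $\sigma_N$ are matched to the first two moments of $B_N$. Everything else — the identification of $B_m^N$ as the Appell sequence generated by $B_N$ and the bookkeeping of the standardization — is routine once the moments $\mu_N=N/2$ and $\sigma_N^2=N/12$ are in hand.
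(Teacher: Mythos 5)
Your proof is correct and follows essentially the same route as the paper: identify the Appell sequence generated by $B_N$ with the generalized Bernoulli polynomials via $\widehat{B}_N(iz)=\left((e^z-1)/z\right)^N$, standardize with $\mu_N=N/2$ and $\sigma_N^2=N/12$, and invoke Theorem~\ref{Th2}. The only difference is that where the paper verifies the hypothesis of Theorem~\ref{Th2} by citing the known convergence (with rate) of the standardized B-splines to the Gaussian, you check condition (\ref{abs1}) directly on the Fourier side via the expansion of $N\log\left((e^{zh}-1)/(zh)\right)$, which is self-contained and in fact supplies exactly the $O(1/\sigma_N)$ bound that the paper's proof of Theorem~\ref{Th2} actually uses.
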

\begin{proof}
Recall that the uniform B-spline, $B_N(x)$, of order $N$, is the
scaling function satisfying
\begin{equation*}
B_N(x)=2\sum_{j=0}^N \frac{1}{2^N}\binom {N}{j}B_N(2x-j),
\end{equation*}
where the mask $\psi_N(k):=\frac{1}{2^n}\binom{n}{k}$. Equivalently,
the scaling equation can be expressed in term of Fourier transforms
in the frequency domain in the form:
\begin{eqnarray*}
\widehat{B}_N(\omega)&= &\widehat{\psi}_N (\omega/2)\widehat{B}_N(\omega/2),
\end{eqnarray*}
where the Fourier transform of the mask is $\widehat{\psi}_N (\omega)=\left( \frac{1+e^{-i\omega}}{2}\right)^N .$
From the fourier transform of $B_N$ we have
\begin{equation}
\widehat{B}_N(i\omega)=\left( \frac{e^\omega-1}{\omega}\right)^N.
\end{equation}

By (\ref{pm}), (\ref{orth}) and the generating function of
$B_m^N(z)$, the generalized Bernoulli polynomials, $\{B_m^N(z): m=0,
1,\ldots\}, $ are biorthogonal to the derivatives of the
\textit{B-splines}, $B_N^{(n)}$,
\begin{equation}
\left\langle (-1)^n B_N^{(n)}(z), \frac{B_m^N(z)}{m!}\right\rangle=\delta _{m,n}.
\end{equation}

The standardized \textit{B-splines},
\begin{equation*}
\tilde{B}_N(x)=\sqrt {\frac{N}{12}}B_N\left( \frac{N}{12}x +\frac{N}{12}\right),
\end{equation*}
converges uniformly to the Gaussian function, $G(x)$, and an
estimate of the rate of convergence is given in \cite{Goodman, Lee,
YW} .

By Theorem \ref{Th2}, we have
\begin{equation*}
\lim_{N\rightarrow\infty}\left( \frac{12}{N}\right)^{\frac{m}{2}}B_m^{N}\left(\sqrt{\frac{N}{12}}z+\frac{N}{2}\right)=H_m(z).
\end{equation*}
\end{proof}

It is well known that the binomial distributions converge to the normal distribution in the sense that
\begin{equation}
\lim_{N\rightarrow\infty}\sum_{k=0}^{[x_N]}\frac{1}{2^{N}}\binom {N}{k}=\frac{1}{\sqrt{2\pi}}\int_{-\infty}^x e^{-t^2/2}dt,
\end{equation}
where $x_N= \sqrt{N}x/2+N/2$.
Let $\sigma_N=\sqrt{N}x/2$ and $\mu_N=N/2$, then the standardized binomial distributions, $\tilde{\psi}_N(x):=\sigma_N\psi_N(\sigma_Nx+\mu_N)$,
converges uniformly to the Gaussian function $G(x)=\frac{1}{\sqrt{2\pi}}e^{-x^2/2}$.

The Appell polynomials generated by the binomial distributions are
the generalized Euler polynomials of order $N$ and when suitably
normalized they converge to the Hermit polynomials as
$N\rightarrow\infty$.
\begin{corollary}
\begin{equation}
\lim_{N\rightarrow\infty}\left(
\frac{4}{N}\right)^{\frac{m}{2}}E_m^N(\frac{\sqrt{N}}{2}z+\frac{N}{2})=H_m(z).
\end{equation}
\end{corollary}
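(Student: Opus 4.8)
The plan is to apply Theorem~\ref{Th2} with $\tilde{\phi}_N$ taken to be the standardized binomial distribution, exactly as in the preceding corollary for the Bernoulli case. First I would confirm that the Appell sequence attached to the binomial measure $\psi_N=\sum_{k=0}^{N}\frac{1}{2^{N}}\binom{N}{k}\delta_{k}$ is the generalized Euler family $\{E_m^N\}$. Its Fourier transform is $\widehat{\psi}_N(\omega)=\bigl(\frac{1+e^{-i\omega}}{2}\bigr)^{N}$, so that $\widehat{\psi}_N(iz)=\bigl(\frac{1+e^{z}}{2}\bigr)^{N}$ and therefore
\[
\frac{e^{xz}}{\widehat{\psi}_N(iz)}=\frac{2^{N}e^{xz}}{(e^{z}+1)^{N}},
\]
which is precisely the generating function (\ref{defB}) of the $E_m^N$. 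By (\ref{pm}) the polynomials $P_{N,m}$ generated by $\psi_N$ coincide with $E_m^N$.

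Next I would record that $\psi_N$ is the law of a $\mathrm{Binomial}(N,\tfrac12)$ variable, with mean $\mu_N=N/2$ and variance $\sigma_N^{2}=N/4$, so $\sigma_N=\sqrt{N}/2$. Then the standardized biorthogonal polynomial is
\[
\tilde{P}_{N,m}(z)=\sigma_N^{-m}P_{N,m}(\sigma_N z+\mu_N)=\left(\frac{4}{N}\right)^{m/2}E_m^N\!\left(\frac{\sqrt{N}}{2}z+\frac{N}{2}\right),
\]
so the asserted limit is exactly the conclusion $\tilde{P}_{N,m}\to H_m$ of Theorem~\ref{Th2}. It therefore remains only to verify hypothesis~(1) of that theorem for $\tilde{\psi}_N$.

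This verification is the main obstacle. From the standardization identity $\widehat{\tilde{\psi}}_N(iz)=e^{-z\mu_N/\sigma_N}\,\widehat{\psi}_N(iz/\sigma_N)$ (a routine change of variables) together with $\mu_N/\sigma_N=\sqrt{N}$ and $1/\sigma_N=2/\sqrt{N}$, one gets
\[
\widehat{\tilde{\psi}}_N(iz)=\exp\!\left(N\log\frac{1+e^{t}}{2}-z\sqrt{N}\right),\qquad t=\frac{2z}{\sqrt{N}}.
\]
I would then use that $g(t):=\log\frac{1+e^{t}}{2}$ is analytic near $t=0$ with expansion $g(t)=\frac{t}{2}+\frac{t^{2}}{8}+O(t^{3})$; substituting $t=2z/\sqrt{N}$ gives $N g(t)=z\sqrt{N}+\frac{z^{2}}{2}+O(N^{-1/2})$, whence the exponent converges to $z^{2}/2$. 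The delicate point is the \emph{uniformity} of this limit on a fixed disk $|z|<r$: since $g$ is analytic at $0$, its Taylor remainder obeys $\bigl|g(t)-\frac{t}{2}-\frac{t^{2}}{8}\bigr|\le C|t|^{3}$ for $|t|$ small, so for $|z|\le r$ one has $|N\,O(t^{3})|\le 8Cr^{3}/\sqrt{N}\to0$ independently of $z$. This yields $\widehat{\tilde{\psi}}_N(iz)\to e^{z^{2}/2}$ uniformly on $|z|<r$, which is hypothesis~(1). Theorem~\ref{Th2} then gives the locally uniform convergence $\tilde{P}_{N,m}\to H_m$, and the identification of $\tilde{P}_{N,m}$ above is the claimed statement.
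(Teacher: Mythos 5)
Your proposal is correct and follows the same route as the paper: identify the Appell sequence of the binomial measure $\psi_N$ with the generalized Euler polynomials via the generating function (\ref{defB}), standardize with $\mu_N=N/2$, $\sigma_N=\sqrt{N}/2$, and invoke Theorem~\ref{Th2}. The only difference is that you explicitly verify hypothesis~(1) by expanding $\log\frac{1+e^{t}}{2}$, a step the paper leaves implicit by appealing to the normal approximation of the binomial distribution (and where it contains a typo, $\sigma_N=\sqrt{N}x/2$); your version is the more complete one.
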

\begin{proof}
From the Fourier transform of $\psi_N(x)$ we have
\begin{equation}
\widehat{\psi}_N(i\omega)=\left( \frac{e^\omega+1}{2}\right)^N.
\end{equation}
The Appell polynomials generated by binomial distributions,
$\psi_n(k):=\frac{1}{2^n}\binom{n}{k}$, are the generalized Euler
polynomials, $E_m^N(z), m=0,1,\ldots,$ that are biorthogonal to the
derivatives of $\psi_N(z)$,
\begin{equation}
\left\langle (-1)^n \psi_N^{(n)}(z), \frac{E_m^N(z)}{m!}\right\rangle=\delta _{m,n}.
\end{equation}
By Theorem (\ref{Th2}), the normalized generalized Euler polynomials
$\tilde{E}_m^N(z)$ converge to the Hermit polynomials, $H_m(x)$, as
$N\rightarrow \infty$,
\begin{equation}
\lim_{N\rightarrow\infty}\left(
\frac{4}{N}\right)^{\frac{m}{2}}E_m^N(\frac{\sqrt{N}}{2}z+\frac{N}{2})=H_m(z).
\end{equation}
\end{proof}

\begin{corollary}
Generalized Euler and Bernoulli polynomials satisfy
\begin{equation}
 B_m^N(z)=\frac{1}{2^m}\sum_{k=0}^m\binom{m}{k} E_k^N(z) B_{m-k}^N(z).
\end{equation}
\begin{proof}
Let $\widehat{\psi}_N(\omega)=\left(\frac{1+e^{-i\omega}}{2}\right)^N$ and $ \widehat{B}_N(\omega)=\left( \frac{1-e^{-i\omega}}{i\omega}\right)^N$. By (\ref{defE}) (\ref{defB}), we can see that the Euler and Bernoulli polynomials are generated by
$\widehat{\psi}_N(i\omega)$ and $\widehat{B}_N(i\omega)$ respectively. Recall that the uniform
B-spline, $B_N$, of order $N$, is the scaling function satisfying
\begin{eqnarray*}
\widehat{B}_N(\omega)&= &\widehat{\psi}_N (\omega/2)\widehat{B}_N(\omega/2),
\end{eqnarray*}
and the Fourier transform of $B_N$ is
\begin{eqnarray*}
\widehat{B}_N(\omega)=\left( \frac{1-e^{-i\omega}}{i\omega}\right)^N, && \omega\in \mathbb{R}.
\end{eqnarray*}
Therefore by Theorem \ref{2.2}, it holds
\begin{equation}
 B_m^N(z)=\frac{1}{2^m}\sum_{k=0}^m\binom{m}{k} E_k^N(z) B_{m-k}^N(z).
\end{equation}
\end{proof}
\end{corollary}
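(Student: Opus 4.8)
The plan is to read the desired formula off Theorem~\ref{2.2} as the special case $\alpha=2$. First I would pin down which Appell sequence corresponds to the mask and which to the scaling function. Using $\widehat{B}_N(i\omega)=\left(\frac{e^\omega-1}{\omega}\right)^N$ and $\widehat{\psi}_N(i\omega)=\left(\frac{e^\omega+1}{2}\right)^N$, the generating functions (\ref{defE}) and (\ref{defB}) become $e^{z\omega}/\widehat{B}_N(i\omega)$ and $e^{z\omega}/\widehat{\psi}_N(i\omega)$ respectively; comparing with (\ref{phi}) and (\ref{mask}) shows that $B_m^N$ is the Appell sequence generated by $\phi=B_N$, while $E_m^N$ is the Appell sequence generated by the mask $\psi=\psi_N$.

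Next I would verify the refinement hypothesis of Theorem~\ref{2.2}. The uniform B-spline satisfies the two-scale relation $\widehat{B}_N(\omega)=\widehat{\psi}_N(\omega/2)\widehat{B}_N(\omega/2)$, which is the Fourier form of (\ref{scale B}) and says precisely that $B_N$ is a dyadic, i.e.\ $\alpha=2$, compactly supported scaling function with mask $\psi_N$. Hence all the hypotheses of Theorem~\ref{2.2} hold with $\alpha=2$, $P_m=E_m^N$ and $Q_m=B_m^N$.

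Finally I would substitute into the equivalence (\ref{scale}). With $\alpha=2$ it reads
\begin{equation*}
\sum_{k=0}^{m}2^{-m}\binom{m}{k}E_k^N(2x)\,B_{m-k}^N(2x)=B_m^N(2x),
\end{equation*}
and setting $z=2x$ produces the asserted identity. I expect no genuine obstacle here, since the statement is a direct specialization of Theorem~\ref{2.2}; the only delicate point is the bookkeeping of conventions, namely assigning the mask role to $\psi_N$ (so that the Euler polynomials $E^N$ appear inside the sum) and the scaling-function role to $B_N$, and observing that when $\alpha=2$ the common argument $\alpha x=2x$ on the left coincides with the argument of $Q_m$ on the right, which is exactly what legitimizes the final substitution $z=2x$.
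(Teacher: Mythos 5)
Your proposal is correct and follows essentially the same route as the paper: identify $E_m^N$ as the Appell sequence of the mask $\psi_N$ and $B_m^N$ as that of the scaling function $B_N$ via their generating functions, note the two-scale relation $\widehat{B}_N(\omega)=\widehat{\psi}_N(\omega/2)\widehat{B}_N(\omega/2)$, and apply Theorem~\ref{2.2} with $\alpha=2$. Your write-up is in fact slightly more careful than the paper's, since you make explicit the substitution $z=2x$ that turns (\ref{scale}) into the stated identity.
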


\section{Generalized Buchholz polynomials and Ultraspherical
(Gegenbauer) polynomials}
\begin{theorem}\label{Th3}
Let $\{\tilde{P}_{N,m}(x): m=0,1,\ldots\} $ be the polynomials
sequence generated by
$\frac{\tilde{f}_N(x,z)}{\widehat{\tilde{\phi}}_N(iz)}=\sum_{m=0}^\infty\frac{\tilde{P}_{N,m}(x)z^m}{m!}
$. There are constants $r>0$ and $A>0$, such that for all sufficient
large $N$, it holds
$\left|\tilde{f}_N(x,z)-e^{xz}\right|\leq\frac{A}{\sigma_N},$ and
$\left|\widehat{\tilde{\phi}}_N(iz)-e^{\frac{z^2}{2}}\right|\leq\frac{A}{\sigma_N},$
for $|z|<r.$
 Then for each $m=0,1,\ldots$, $\tilde{P}_{N,m}(x)$ converges locally uniformly to the Hermit polynomial $H_m(x)$ as $N$ goes to infinity.
\end{theorem}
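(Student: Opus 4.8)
The plan is to follow the contour-integral argument of Theorem~\ref{Th2} almost verbatim, the single new feature being that the numerator $\tilde{f}_N(x,z)$ now also depends on $N$, so both hypotheses must be invoked simultaneously. Since the Hermite polynomials are generated by $e^{xz-z^2/2}=e^{xz}/e^{z^2/2}$, subtracting their generating function from that of $\tilde{P}_{N,m}$ gives
\[
\sum_{m=0}^\infty \frac{\tilde{P}_{N,m}(x)-H_m(x)}{m!}\,z^m
=\frac{\tilde{f}_N(x,z)}{\widehat{\tilde{\phi}}_N(iz)}-\frac{e^{xz}}{e^{z^2/2}}.
\]
First I would fix a circle $C$ of radius $r$ centered at the origin, shrinking $r$ if necessary, on which $|\widehat{\tilde{\phi}}_N(iz)|\ge \tfrac12$ and $|e^{z^2/2}|\ge \tfrac12$ for all large $N$; this is legitimate because $\widehat{\tilde{\phi}}_N(0)=1$ and, by the second hypothesis, $\widehat{\tilde{\phi}}_N(iz)$ is uniformly close to $e^{z^2/2}$ on $C$. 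Cauchy's formula then extracts
\[
\tilde{P}_{N,m}(x)-H_m(x)
=\frac{m!}{2\pi i}\oint_C \frac{1}{z^{m+1}}\left(\frac{\tilde{f}_N(x,z)}{\widehat{\tilde{\phi}}_N(iz)}-\frac{e^{xz}}{e^{z^2/2}}\right)dz.
\]

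The key step is to bound the integrand. Placing the bracketed difference over the common denominator $\widehat{\tilde{\phi}}_N(iz)\,e^{z^2/2}$, whose modulus is at least $\tfrac14$ on $C$, the numerator becomes $\tilde{f}_N(x,z)\,e^{z^2/2}-e^{xz}\,\widehat{\tilde{\phi}}_N(iz)$, which I would split by inserting $\pm\,e^{xz}e^{z^2/2}$:
\[
\tilde{f}_N(x,z)\,e^{z^2/2}-e^{xz}\,\widehat{\tilde{\phi}}_N(iz)
=e^{z^2/2}\left(\tilde{f}_N(x,z)-e^{xz}\right)+e^{xz}\left(e^{z^2/2}-\widehat{\tilde{\phi}}_N(iz)\right).
\]
Now both hypotheses apply termwise: on $|z|=r$ each parenthesis is $\le A/\sigma_N$, while $|e^{z^2/2}|\le e^{r^2/2}$ and $|e^{xz}|\le e^{r|x|}$ for real $x$. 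Combining these with the lower bound on the denominator and the length $2\pi r$ of $C$, the same estimate as in Theorem~\ref{Th2} gives
\[
\left|\tilde{P}_{N,m}(x)-H_m(x)\right|
\le \frac{4\,m!\,A\,\left(e^{r^2/2}+e^{r|x|}\right)}{\sigma_N\,r^m}.
\]
Because $\sigma_N\to\infty$ and $e^{r|x|}$ is bounded on any compact set of $x$, the right-hand side tends to $0$ uniformly there, which is the desired local uniform convergence.

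The only genuinely new ingredient beyond Theorem~\ref{Th2} is the additive splitting of the numerator, which decouples the two approximations so that the hypothesis on $\tilde{f}_N$ and the hypothesis on $\widehat{\tilde{\phi}}_N$ can each be applied to its own factor; the remainder is the identical Cauchy-estimate bookkeeping. I therefore expect no serious obstacle: the main point is simply to recognize that a single triangle-inequality split reduces the statement to the scalar estimate already carried out in Theorem~\ref{Th2}.
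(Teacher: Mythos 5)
Your proposal is correct and follows essentially the same route as the paper's own proof: the same Cauchy integral representation over a circle of radius $r$, the same insertion of $\pm\,e^{xz}e^{z^2/2}$ to split the numerator so each hypothesis bounds its own term, and the same final estimate of order $A/(\sigma_N r^m)$. No substantive differences to report.
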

\begin{remark}
When $\tilde{f}_N(x,z)=e^{xz}$, theorem (\ref{Th3}) turns to theorem
(\ref{Th2}).
\end{remark}
\begin{proof}
Since $\widehat{\tilde{\phi}}_N(0)=1$, we can choose a neighborhood
$U$ of the origin so that
$\left|\widehat{\tilde{\phi}}(iz)\right|\geq\frac{1}{2}$ and $\left|
e^{\frac{z^2}{2}}\right|\geq \frac{1}{2}$ for all $z\in U$. Take a
circle $C$ inside $U$ with center at $0$ and radius $r$, so that for
all sufficient large $N$, it holds
$\left|\tilde{f}_N(x,z)-e^{xz}\right|\leq\frac{A}{\sigma_N}$ and
$\left|\widehat{\tilde{\phi}}_N(iz)-e^{\frac{z^2}{2}}\right|\leq\frac{A}{\sigma_N},$
for $|z|<r.$

The coefficients of the Taylor series
\begin{equation}\label{taylor3}
\frac{\tilde{f}_N(x,z)
}{\widehat{\tilde{\phi}}(iz)}-\frac{e^{xz}}{e^{\frac{z^2}{2}}}=\sum_{m=0}^\infty\frac{\tilde{P}_{N,m}(x)-H_m(x)}{m!}z^m
\end{equation}
 are represented by the Cauchy's integral formula
\begin{eqnarray*}
\tilde{P}_{N,m}(x)-H_m(x)
&&=\frac{m!}{2\pi i}\oint_C\frac{1}{z^{m+1}}\left(\frac{\tilde{f}_N(x,z)}{\widehat{\tilde{\phi}}_N(iz)}-\frac{e^{xz}}{e^\frac{z^2}{2}}\right)dz\\
&&=\frac{m!}{2\pi i}\oint_C
\frac{\left(e^{\frac{z^2}{2}}\tilde{f}_N(x,z)-e^{xz}\widehat{\tilde{\phi}}(iz)\right)}{z^{m+1}\widehat{\tilde{\phi}}(iz)e^{\frac{z^2}{2}}}dz.\\
\end{eqnarray*}

Therefore, for any given real numbers $A >0$, there is a sufficient
large $N_0$, for any $N>N_0$, it holds
\begin{eqnarray*}
\left|\tilde{P}_{N,m}(x)-H_m(x)\right| &&\leq\frac{m!}{2\pi
}\oint_C\frac{\left|e^{\frac{z^2}{2}}\tilde{f}_N(x,z)-e^{xz}\widehat{\tilde{\phi}}(iz)\right|}
{\left|z^{m+1}\right|\left|\widehat{\tilde{\phi}}(iz)\right|\left|e^{\frac{z^2}{2}}\right|}|dz|\\
&&\leq\frac{m!}{2\pi
}\oint_C\frac{\left|e^{\frac{z^2}{2}}\tilde{f}_N(x,z)-e^{xz}e^{\frac{z^2}{2}}\right|+\left|e^{xz}e^{\frac{z^2}{2}}-e^{xz}\widehat{\tilde{\phi}}(iz)\right|}
{r^{m+1}\left|\widehat{\tilde{\phi}}(iz)\right|\left|e^{\frac{z^2}{2}}\right|}|dz|\\
&&\leq\frac{m!}{2\pi
}\oint_C\frac{e^{Re\frac{z^2}{2}}\left|\tilde{f}_N(x,z)-e^{xz}\right|+e^{xRez}\left|e^{\frac{z^2}{2}}-\widehat{\tilde{\phi}}(iz)\right|}
{r^{m+1}\left|\widehat{\tilde{\phi}}(iz)\right|\left|e^{\frac{z^2}{2}}\right|}|dz|\\
&&\leq\frac{4(m!)A\left(e^{\frac{r^2}{2}}+ e^{xr}\right)}{\sigma_N
r^m}
\end{eqnarray*}
Since $\sigma_N\rightarrow\infty$ as $N\rightarrow \infty$, It
follows that for each $m$, $\widehat{\tilde{P}}_{N,m(x)}\rightarrow
H_m(x)$ uniformly on compact sets.
\end{proof}

Generalized Buchholz and Ultraspherical (Gegenbauer) polynomials of
degree $m$, order $N$ and complex argument $x$, denoted respectively
by $P_m^N(x)$ and $ C_m^N(x)$, can be defined by their generating
functions,
\begin{eqnarray*}
e^{x(cotz -\frac{1}{z})/2}\left(\frac{\sin z}{z}\right)^N
=\sum_{m=0}^{\infty}P_m^N(x)z^m, && |z|<\pi\\
\end{eqnarray*}
and
\begin{eqnarray*}
(1-2xz +z^2)^{-N}=\sum_{n=0}^\infty C_m^N(x) z^m, && -1\leq x\leq1, |\omega|<1.\\
\end{eqnarray*}
Buchholz polynomials are used for the representation of the
Whittaker functions as convergent series expansions of Besse
functions \cite{Buchholz}. They appear also in the convergent
expansions of the Whittaker functions in ascending powers of their
order and in the asymptotic expansions of the Whittaker functions in
descending powers of their order \cite{L-Sesma99}. Explicit formulas
for obtaining these polynomials may be found in \cite{Abad}.

There are well known limits\cite{L_T_Bernoulli}
\begin{equation*}
\lim_{N\rightarrow\infty}\left(
\frac{3}{N}\right)^{\frac{m}{2}}P_m^N(-2\sqrt{3N}x)=\frac{1}{m!}H_m(x)
\end{equation*}
and
\begin{eqnarray*}
\lim_{N\rightarrow\infty}
(2N)^{-\frac{m}{2}}C_m^N\left(\frac{x}{\sqrt{2N}}\right)&=&\frac{1}{m!}H_m(x).
\end{eqnarray*}
These limits give insight in the location of the zeros for large
values of the limit parameter, and the asymptotic relation with the
Hermite polynomials if the parameter $N$ become large and $x$ is
properly scaled.

Many methods are available to prove these and other
limits\cite{L_T_Bernoulli}. We can get these asymptotic results from
theorem \ref{Th3} as simple cases.

\begin{lemma}\label{lemma1}
For any $|z|<\pi$,
\begin{eqnarray*}
\lim_{N\rightarrow \infty}\textrm{sinc}^N\left(\frac{z}{2}\sqrt{\frac{12}{N}}\right)=
\exp\left({-\frac{z^2}{2}}\right).
\end{eqnarray*}
\end{lemma}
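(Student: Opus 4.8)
The plan is to take logarithms and reduce the whole statement to the elementary Taylor expansion of $\sin w/w$ near $w=0$. First I would simplify the argument: writing $w_N := \tfrac{z}{2}\sqrt{12/N}=z\sqrt{3/N}$, we have $w_N\to 0$ as $N\to\infty$ (uniformly for $z$ in any compact set), together with the crucial bookkeeping identity $N w_N^2 = 3z^2$. This identity is the reason the normalization $\sqrt{12/N}$ produces precisely the Gaussian exponent.

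Since $\operatorname{sinc} w=\sin w/w$ is entire with $\operatorname{sinc} 0=1$, for all sufficiently large $N$ the value $\operatorname{sinc} w_N$ lies in a fixed disc about $1$ on which the principal logarithm is holomorphic. Hence I may write $\operatorname{sinc}^N(w_N)=\exp\bigl(N\,\mathrm{Log}\,\operatorname{sinc}(w_N)\bigr)$ and work entirely with the exponent, so that the problem becomes one of finding the limit of a single scalar quantity rather than of an $N$-th power.

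The key step is the local expansion. From $\operatorname{sinc} w = 1-\tfrac{w^2}{6}+\tfrac{w^4}{120}-\cdots$ one obtains $\mathrm{Log}\,\operatorname{sinc} w=-\tfrac{w^2}{6}+O(w^4)$ as $w\to 0$. Substituting $w=w_N$ and multiplying by $N$ gives
$$
N\,\mathrm{Log}\,\operatorname{sinc}(w_N)=-\frac{N w_N^2}{6}+N\cdot O(w_N^4)=-\frac{z^2}{2}+O\!\left(\frac{1}{N}\right),
$$
because $N w_N^2=3z^2$ and $N w_N^4=9z^4/N\to 0$. Letting $N\to\infty$ and invoking continuity of the exponential yields the claimed limit $\exp(-z^2/2)$.

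The only point requiring genuine care — the part I would treat as the main obstacle — is making the term $N\cdot O(w_N^4)$ rigorous and uniform, i.e. controlling the remainder of the logarithm's expansion on a disc whose radius does not shrink with $N$. This is exactly where $w_N\to 0$ is used: once $|w_N|$ is below the radius of convergence of the series for $\mathrm{Log}\,\operatorname{sinc}$, the tail is bounded by $C|w_N|^4$ with $C$ independent of $N$, so that multiplication by $N$ leaves an $O(1/N)$ contribution. The hypothesis $|z|<\pi$ is far more than enough to keep $w_N$ inside this disc for large $N$ (indeed any compact set of $z$ suffices), and it simultaneously shows that the convergence is locally uniform in $z$.
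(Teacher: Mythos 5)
Your proof is correct and takes essentially the same route as the paper: both arguments pass to the logarithm, use the identity $N w_N^2 = 3z^2$ to reduce the claim to the limit of $\ln\bigl[\textrm{sinc}(w)\bigr]/w^2$ as $w\to 0$, and conclude by exponentiating. The only difference is that you evaluate that limit from the Taylor expansion of $\ln\textrm{sinc}$ with explicit remainder control (which also yields an $O(1/N)$ rate), whereas the paper applies L'H\^{o}pital's rule twice; this is a cosmetic distinction.
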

\begin{proof}
Set
\begin{equation}\label{eq:2.6}
L_N(z):=N\ln
\left[\textrm{sinc}\left(\frac{z}{2}\sqrt{\frac{12}{N}}\right)\right].
\end{equation}
Then with $z_N=\frac{z}{2}\sqrt{\frac{12}{N}}$, it holds
\begin{eqnarray*}
L_N(z)&:=&N\ln
\left[\textrm{sinc}\left(\frac{z}{2}\sqrt{\frac{12}{N}}\right)\right]=3z^2\frac{\ln\left[\textrm{sinc}(\frac{z}{2}\sqrt{\frac{12}{N}})\right]}
{3z^2/N}=3z^2\frac{\ln\left[\textrm{sinc} (z_N)\right]}{z_N^2}.
\end{eqnarray*}
Since it holds $\textrm{sinc}(0)=1,\textrm{sinc}^{(1)}(0)=0$ and $\textrm{sinc}^{(2)}(0)=-\frac{1}{3}$,
for the $\lim_{N\rightarrow\infty}$, and hence $z_N\rightarrow 0$, we may apply L' H\^{o}pital's rule twice:
For any $|z|<\pi$, we have
\begin{eqnarray*}
\lim_{N\rightarrow \infty} L_N(z)&=&3z^2 \lim_{N\rightarrow \infty} \frac{\textrm{sinc}^{(1)}(z_N)}{2z_N\textrm{sinc} (z_N)}=3z^2 \lim_{N\rightarrow \infty} \frac{\textrm{sinc}^{(2)}(z_N)}{2\textrm{sinc} (z_N)+2z_N\textrm{sinc}^{(1)}(z_N)}\\
&=&3z^2\frac{\textrm{sinc}^{(2)}(0)}{2}=-\frac{z^2}{2}.
\end{eqnarray*}
It follows: For any $|z|<\pi$,
\begin{eqnarray*}
\lim_{N\rightarrow \infty}\textrm{sinc}^N\left(\frac{z}{2}\sqrt{\frac{12}{N}}\right)=
\exp\left({-\frac{z^2}{2}}\right).
\end{eqnarray*}
\end{proof}

\begin{corollary}
\begin{equation}
\lim_{N\rightarrow\infty}\left(
\frac{3}{N}\right)^{\frac{m}{2}}P_m^N(-2\sqrt{3N}x)=\frac{1}{m!}H_m(x).
\end{equation}
\end{corollary}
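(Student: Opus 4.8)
The plan is to deduce the corollary directly from Theorem~\ref{Th3} by rescaling the generating function of the Buchholz polynomials into the form $\tilde f_N(x,z)/\widehat{\tilde\phi}_N(iz)$. Concretely, in the defining identity I would substitute the generating variable by $z\sqrt{3/N}$ and the argument by $-2\sqrt{3N}\,x$. Since the left-hand side carries $w^m$ rather than $w^m/m!$, with $w=z\sqrt{3/N}$ this produces
\[
\sum_{m=0}^\infty m!\left(\frac{3}{N}\right)^{m/2}P_m^N\left(-2\sqrt{3N}\,x\right)\frac{z^m}{m!}=\exp\left(-\sqrt{3N}\,x\left(\cot(z\sqrt{3/N})-\frac{1}{z\sqrt{3/N}}\right)\right)\left(\frac{\sin(z\sqrt{3/N})}{z\sqrt{3/N}}\right)^N .
\]
Thus $\tilde P_{N,m}(x):=m!\,(3/N)^{m/2}P_m^N(-2\sqrt{3N}x)$ is exactly the sequence generated as in Theorem~\ref{Th3}, with the natural splitting $\widehat{\tilde\phi}_N(iz)=\textrm{sinc}^{-N}(z\sqrt{3/N})$ and $\tilde f_N(x,z)=\exp(-\sqrt{3N}\,x(\cot(z\sqrt{3/N})-1/(z\sqrt{3/N})))$, and with $\sigma_N=\sqrt{N/3}\to\infty$. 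Note the $x$-scaling $2\sqrt{3N}$ and the $z$-scaling $\sqrt{3/N}$ are mismatched, which is precisely why the general Theorem~\ref{Th3} (with a genuine $\tilde f_N$) is needed rather than Theorem~\ref{Th2}.

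Next I would verify the two hypotheses on a fixed disk $|z|<r$, noting that for any fixed $r$ the point $z\sqrt{3/N}$ lies inside the disk of radius $\pi$ once $N$ is large, so that $\textrm{sinc}$ and $\cot(\cdot)-1/(\cdot)$ are analytic there. For the denominator, Lemma~\ref{lemma1} already gives $\textrm{sinc}^N(z\sqrt{3/N})\to e^{-z^2/2}$, i.e. $\widehat{\tilde\phi}_N(iz)\to e^{z^2/2}$; to obtain the quantitative rate I would expand $-N\log\textrm{sinc}(z\sqrt{3/N})=\tfrac{z^2}{2}+\tfrac{z^4}{20N}+O(N^{-2})$, so that $|\widehat{\tilde\phi}_N(iz)-e^{z^2/2}|=O(1/N)$ uniformly for $|z|<r$.

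For the numerator I would use the Laurent expansion $\cot w-\tfrac1w=-\tfrac{w}{3}-\tfrac{w^3}{45}-\cdots$, valid for $|w|<\pi$. Inserting $w=z\sqrt{3/N}$, the leading term of the exponent is exactly $xz$, while the term coming from $w^{2k-1}$ carries a factor $N^{1-k}$, so every correction with $k\ge2$ is $O(1/N)$; hence the exponent equals $xz+O(1/N)$ and $|\tilde f_N(x,z)-e^{xz}|=O(1/N)$ uniformly for $|z|<r$ and $x$ in any compact set. Since both estimates are $O(1/N)=o(1/\sigma_N)$, they are dominated by $A/\sigma_N$ for a suitable constant $A$ once $N$ is large, so the hypotheses of Theorem~\ref{Th3} are met. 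Applying it gives $\tilde P_{N,m}(x)\to H_m(x)$ locally uniformly, which rearranges to $\lim_{N\to\infty}(3/N)^{m/2}P_m^N(-2\sqrt{3N}x)=\tfrac{1}{m!}H_m(x)$.

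The main obstacle I expect is upgrading the bare convergence furnished by Lemma~\ref{lemma1} to the quantitative, uniform-in-$z$ bound of order $O(1/\sigma_N)$ that Theorem~\ref{Th3} requires, and in particular controlling the $x$-dependent exponential $\tilde f_N(x,z)$ uniformly on a compact set. This amounts to bounding the tails of the two power series after the $z\sqrt{3/N}$ substitution uniformly on $|z|<r$, carefully tracking which terms survive in the limit and which collapse to $O(1/N)$. The remaining bookkeeping—matching the scalings $\sqrt{3/N}$ and $2\sqrt{3N}$ and checking $\sigma_N\to\infty$—is routine.
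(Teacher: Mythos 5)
Your proposal is correct and follows essentially the same route as the paper: rescale the Buchholz generating function, split it into $\tilde f_N$ and $\widehat{\tilde\phi}_N$, verify the two convergence hypotheses via the Laurent expansion of $\cot w-\tfrac1w$ and Lemma~\ref{lemma1}, and invoke Theorem~\ref{Th3}. Your substitution $z\mapsto z\sqrt{3/N}$ lands directly on the limits $e^{xz}$ and $e^{z^2/2}$, avoiding the paper's intermediate normalization (which produces $e^{2x\omega}$ and $e^{\omega^2}$ and requires a final rescaling through $H_m(\sqrt{2}x)\,(\sqrt{2})^m$), and you also supply the quantitative $O(1/\sigma_N)$ bounds that the hypotheses of Theorem~\ref{Th3} formally demand, whereas the paper only checks the bare limits.
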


\begin{proof}
Let $\sigma_N=\sqrt{\frac{N}{12}}$ then
\begin{equation*}
\sum_{m=0}^\infty
\sigma_N^{-m}P_m^N(-12\sqrt{2}x\sigma_N)\left(\frac{\omega}{\sqrt{2}}\right)
^m=e^{-6\sqrt{2}x\sigma_N\left(\cot{\frac{\omega}{\sqrt{2}\sigma_N}}-\frac{\sqrt{2}\sigma_N}
{\omega}\right)}\left(\frac{\sin\frac{\omega}{\sqrt{2}\sigma_N}}
{\frac{\omega}{\sqrt{2}\sigma_N}}\right)^N
\end{equation*}
Let $\tilde{f}_N(\omega,
x)=e^{-6\sqrt{2}x\sigma_N\left(\cot{\frac{\omega}{\sqrt{2}\sigma_N}}-\frac{\sqrt{2}\sigma_N}{\omega}\right)}$
and
$\widehat{\tilde{\phi}}_N(i\omega)=\left(\frac{\sin\frac{\omega}{\sqrt{2}\sigma_N}}{\frac{\omega}{\sqrt{2}\sigma_N}}\right)^{-N}$.
By Taylor theorem, for any $|\omega|<\pi$ and sufficient large $N$,
it holds
\begin{equation*}
\ln\tilde{f}_N(\omega,x)=-6\sqrt{2}x\sigma_N\left(\cot{\frac{\omega}{\sqrt{2}\sigma_N}}-\frac{\sqrt{2}\sigma_N}{\omega}\right)=
-6\sqrt{2}x\sigma_N\left(-\frac{\omega}{3\sqrt{2}\sigma_N}+O(\sigma_N^{-3})\right).
\end{equation*}
Therefore, for $N\rightarrow +\infty$, we have
\begin{equation*}
\lim_{N\rightarrow \infty}\tilde{f}_N(\omega, z)=e^{2x\omega}.
\end{equation*}
By lemma \ref{lemma1}, we have
\begin{equation*}
\lim_{N\rightarrow
+\infty}\widehat{\tilde{\phi}}_N(i\omega)=\lim_{N\rightarrow
\infty}\left(\frac{\sin\frac{\omega}{\sqrt{2}\sigma_N}}{\frac{\omega}{\sqrt{2}\sigma_N}}\right)^{-N}
=e^{\omega^2}.
\end{equation*}
Therefore By Theorem \ref{Th3}, it holds
\begin{equation}
\lim_{N\rightarrow\infty}\left(
\frac{6}{N}\right)^{\frac{m}{2}}P_m^N(-2\sqrt{6N}x)=\frac{1}{m!}H_m(\sqrt{2}x)\sqrt{2}^m,
\end{equation}
equivalently,
\begin{equation*}
\lim_{N\rightarrow\infty}\left(
\frac{3}{N}\right)^{\frac{m}{2}}P_m^N(-2\sqrt{3N}x)=\frac{1}{m!}H_m(x).
\end{equation*}
\end{proof}

\begin{corollary}
\begin{eqnarray}
\lim_{N\rightarrow\infty}
(2N)^{-\frac{m}{2}}C_m^N\left(\frac{x}{\sqrt{2N}}\right)&=&\frac{1}{m!}H_m(x).
\end{eqnarray}
\end{corollary}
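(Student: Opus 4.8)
The plan is to recast the Gegenbauer generating function in the form demanded by Theorem~\ref{Th3} after rescaling the summation variable, and then to absorb the $x$-free part of the generating function into $\widehat{\tilde{\phi}}_N$. First I would start from the defining relation $(1-2xz+z^2)^{-N}=\sum_m C_m^N(x)z^m$, specialize the argument to $x/\sqrt{2N}$, and substitute $z=w/\sqrt{2N}$. A short computation collapses the cross term and the quadratic term onto a common factor of $1/N$, giving
\[
\left(1-\frac{xw}{N}+\frac{w^2}{2N}\right)^{-N}=\sum_{m=0}^{\infty}(2N)^{-m/2}C_m^N\!\left(\frac{x}{\sqrt{2N}}\right)w^m ,
\]
so that the normalized quantities $(2N)^{-m/2}C_m^N(x/\sqrt{2N})$ are exactly the Taylor coefficients in $w$ of the left-hand side. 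Writing the base as $1+\tfrac1N\bigl(\tfrac{w^2}{2}-xw\bigr)$, the elementary limit $(1+c/N)^N\to e^{c}$ already indicates that the left-hand side tends to $e^{xw-w^2/2}$, the generating function of $H_m(x)/m!$.

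Next I would fix the splitting. Since the generating function does not factor as an $x$-dependent piece times an $x$-free piece, I would simply assign the $x=0$ behaviour to the denominator by setting
\[
\widehat{\tilde{\phi}}_N(iw):=\left(1+\frac{w^2}{2N}\right)^{N},\qquad
\tilde{f}_N(x,w):=\left(\frac{1+\frac{w^2}{2N}}{\,1-\frac{xw}{N}+\frac{w^2}{2N}\,}\right)^{N}.
\]
Then $\widehat{\tilde{\phi}}_N(0)=1$, the ratio $\tilde{f}_N/\widehat{\tilde{\phi}}_N$ equals the left-hand side of the previous display, and consequently this ratio generates $\tilde{P}_{N,m}(x)=m!\,(2N)^{-m/2}C_m^N(x/\sqrt{2N})$ in the sense of Theorem~\ref{Th3}. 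Here $w$ plays the role of the variable $z$ in that theorem, and $\widehat{\tilde{\phi}}_N$ is the reciprocal of the $x=0$ specialization of the generating function, carrying the full Gaussian factor $e^{w^2/2}$.

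Then I would verify the two hypotheses of Theorem~\ref{Th3} on a small circle $|w|<r$. Both reduce to the standard estimate $(1+c_N/N)^N=e^{c_N}\bigl(1+O(1/N)\bigr)$ together with $c_N=c+O(1/N)$: for $\widehat{\tilde{\phi}}_N$ one has $c_N=w^2/2$ exactly, so $\bigl|\widehat{\tilde{\phi}}_N(iw)-e^{w^2/2}\bigr|=O(1/N)$; for $\tilde{f}_N$ one expands the base as $1+\frac{xw/N}{1-xw/N+w^2/(2N)}=1+\tfrac1N\bigl(xw+O(1/N)\bigr)$, whence $\bigl|\tilde{f}_N(x,w)-e^{xw}\bigr|=O(1/N)$, uniformly for $x$ in any fixed compact set and $|w|<r$. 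Taking $\sigma_N=N$ puts both bounds in the form $A/\sigma_N$ required by the theorem, and Theorem~\ref{Th3} then gives $\tilde{P}_{N,m}(x)\to H_m(x)$ locally uniformly, i.e. $(2N)^{-m/2}C_m^N(x/\sqrt{2N})\to \tfrac1{m!}H_m(x)$.

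I expect the only delicate point to be the uniformity of these two estimates rather than any isolated hard step: the variable $w$ runs over a complex circle and $x$ over a compact set, so one must check that the denominator $1-xw/N+w^2/(2N)$ stays bounded away from $0$ there (it does for large $N$, since it tends to $1$), which is what makes the $O(1/N)$ remainder in $(1+c_N/N)^N$ uniform. Once that uniformity is in place the conclusion follows by a direct application of Theorem~\ref{Th3}, exactly as in the Buchholz corollary.
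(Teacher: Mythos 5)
Your proof is correct, and it follows the same overall strategy as the paper---rescale the Gegenbauer generating function to obtain $\bigl(1-\tfrac{xw}{N}+\tfrac{w^2}{2N}\bigr)^{-N}$ and invoke Theorem~\ref{Th3}---but the factorization you choose is genuinely different and, in fact, better adapted to the theorem's hypotheses. The paper writes the rescaled generating function as $g_N^{xz}\,g_N^{-z^2/2}$ with $g_N=\bigl[1-\tfrac{2xz-z^2}{2N}\bigr]^{-2N/(2xz-z^2)}\to e$ and takes the limit of each factor separately; note that there \emph{both} factors depend on $x$, so the object playing the role of $\widehat{\tilde{\phi}}_N(iz)$ is not actually $x$-free, and the paper only records pointwise convergence of the two factors rather than the uniform $A/\sigma_N$ bounds that Theorem~\ref{Th3} is stated with. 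Your splitting, which assigns the $x=0$ specialization $\bigl(1+\tfrac{w^2}{2N}\bigr)^{N}$ to $\widehat{\tilde{\phi}}_N$ and puts the remaining ratio into $\tilde{f}_N$, keeps the denominator independent of $x$ as the theorem's setup presupposes, and your verification of the $O(1/N)$ estimates, uniform for $|w|<r$ and $x$ in a compact set with $\sigma_N=N$, supplies exactly the quantitative hypotheses the theorem requires (including the point that $1-\tfrac{xw}{N}+\tfrac{w^2}{2N}$ stays bounded away from $0$). The two routes reach the same conclusion; yours is the more rigorous instantiation of Theorem~\ref{Th3}, the paper's is shorter at the cost of a looser match with the theorem's hypotheses. (Minor remark: the paper's first display has the cross term $\tfrac{xz}{\sqrt{N}}$, a typo for the $\tfrac{xz}{N}$ you obtain; its subsequent manipulations, and your computation, use the correct form.)
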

\begin{proof}
By the generating function of $C_m^N(x)$,
\begin{equation*}
(1-2xz +z^2)^{-N}=\sum_{n=0}^\infty C_m^N(x) z^m,
\end{equation*}
it holds
\begin{eqnarray*}
\sum_{n=0}^\infty
(2N)^{-\frac{m}{2}}C_m^N\left(\frac{x}{\sqrt{2N}}\right) z^m
&=&\left(1-\frac{xz}{\sqrt{N}}
+\frac{z^2}{2N}\right)^{-N}\\
&=&\left[1-\left(\frac{2xz-z^2}{2N}\right)\right]^{-\frac{2N}{2xz-z^2}(xz-z^2/2)}\\.
\end{eqnarray*}
Let $g_N=\left[1-(\frac{2xz-z^2}{2N})\right]^{-\frac{2N}{2xz-z^2}}$, then $\lim_{N\rightarrow \infty} g_N=e$.
Therefore
\begin{eqnarray*}
\sum_{n=0}^\infty (2N)^{-\frac{m}{2}}C_m^N\left(\frac{x}{\sqrt{2N}}\right) z^m&=& g_N^{xz} g_N^{-z^2/2}.
\end{eqnarray*}
and
\begin{eqnarray*}
\lim_{N\rightarrow \infty} g_N^{xz}&=& e^{xz}\\
\lim_{N\rightarrow \infty} g_N^{-z^2/2} &=&e^{-z^2/2}.
\end{eqnarray*}
By Theorem (\ref{Th3}), we have
\begin{eqnarray*}
\lim_{N\rightarrow\infty}
(2N)^{-\frac{m}{2}}C_m^N\left(\frac{x}{\sqrt{2N}}\right)&=&\frac{1}{m!}H_m(x).
\end{eqnarray*}
\end{proof}

\section{Biorthogonal systems relate to Generalized Laguerre polynomials}

Laguerre polynomials, $L_n(x)$, are solutions to the Laguerre
differential equation
\begin{eqnarray*}
xy^{\prime\prime}+(1-x)y^{\prime}+ny=0, &&n\geq 0.
\end{eqnarray*}
 Laguerre polynomials is a class of orthogonal polynomials with weighting
 function $w(x)=e^{-x}$. The Rodrigues representation for the Laguerre polynomials is
\begin{eqnarray}
L_n(x)=\frac{e^x}{n!}\frac{d^n}{dx^n}(e^{-x}x^n)
\end{eqnarray}
and the generating function for Laguerre polynomials is
\begin{eqnarray}
(1-z)^{-1} e^{\frac{-zx}{1-z}}=\sum_{m=0}^\infty L_m(x)z^m, & &
|z|\leq1.
\end{eqnarray}

The generalized Laguerre polynomials, $L^{(\alpha)}_m$, are also a
class of orthogonal polynomials with weighting function
$w(x)=x^\alpha e^{-x}$ and generated by
\begin{eqnarray}
(1-z)^{-\alpha-1} e^{\frac{-zx}{1-z}}=\sum_{m=0}^\infty
L^{(\alpha)}_m(x)z^m, & & |z|\leq 1.
\end{eqnarray}

The Rodrigues representation for the generalized Laguerre
polynomials is
\begin{eqnarray}
L^{(\alpha)}_n(x)=\frac{x^{-\alpha}e^x}{n!}\frac{d^n}{dx^n}(e^{-x}x^{n+\alpha}).
\end{eqnarray}
When $\alpha=0$, we have $L^{(0)}_n(x)=L_n(x)$.

The explicit formula for $L^{(\alpha)}_n(x)$ is\cite{xu-orthogonal}
\begin{eqnarray}
L^{(\alpha)}_n(x)=\frac{(\alpha+1)_n}{n!}\sum_{j=0}^{n}\frac{(-n)_j}{(\alpha+1)_j}\frac{x^j}{j!},
\end{eqnarray}
where $(\alpha)_n=\prod_{i=1}^n(\alpha+i-1), (\alpha)_0=1$,  for
$n=1,2,3\ldots.$ When $\alpha=0$, the explicit formula for
generalized Laguerre polynomials, $L^{(\alpha)}_n(x)$, become
\begin{eqnarray}
L_n(x)=\sum_{k=0}^n (-1)^k\binom{n}{k}\frac{x^k}{k!}.
\end{eqnarray}
By simply computing, we have
\begin{proposition}\label{L_prime}
\begin{eqnarray}
L^\prime_{n}(x)=L_{n-1}^\prime(x)-L_{n-1}(x).
\end{eqnarray}
\end{proposition}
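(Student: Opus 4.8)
The plan is to work directly from the explicit expansion of the ordinary Laguerre polynomial, $L_n(x) = \sum_{k=0}^n (-1)^k \binom{n}{k} \frac{x^k}{k!}$, which is the $\alpha = 0$ case recorded just above the statement. Differentiating term by term annihilates the constant term and lowers each power by one; after the index shift $j = k-1$ the derivative takes the form $L'_n(x) = -\sum_{j=0}^{n-1} (-1)^j \binom{n}{j+1} \frac{x^j}{j!}$. The whole identity then reduces to comparing, for each $j$, the coefficient of $\frac{x^j}{j!}$ in $L'_n$ against that of $L'_{n-1} - L_{n-1}$.

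The key algebraic step is Pascal's rule. Reading off coefficients, the claim $L'_n = L'_{n-1} - L_{n-1}$ becomes, for each $0 \le j \le n-1$, the numerical identity $\binom{n}{j+1} = \binom{n-1}{j+1} + \binom{n-1}{j}$, which is exactly Pascal's recurrence (once the common factor $(-1)^j$ is accounted for, the signs match because differentiation contributes one sign change via the index shift while $L_{n-1}$ enters with an overall minus). Thus, once the three series are written with a common summation variable, the proposition is immediate.

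The one place requiring a little care — and the only real obstacle — is the bookkeeping at the endpoints of the summation ranges, since $L'_n$, $L'_{n-1}$ and $L_{n-1}$ have expansions of differing lengths. At the top index $j = n-1$ the term $\binom{n-1}{j+1} = \binom{n-1}{n}$ coming from $L'_{n-1}$ simply vanishes, and Pascal's rule degenerates to $\binom{n}{n} = \binom{n-1}{n-1}$; retaining the vanishing binomial in the notation lets a single formula cover every $j$ without splitting into cases.

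As a cross-check that sidesteps all index juggling, I would also note a generating-function argument: differentiating the generating relation $(1-z)^{-1} e^{-zx/(1-z)} = \sum_m L_m(x) z^m$ with respect to $x$ gives $\sum_m L'_m(x) z^m = \tfrac{-z}{1-z} \sum_m L_m(x) z^m$, so that $(1-z)\sum_m L'_m(x) z^m = -z \sum_m L_m(x) z^m$; equating the coefficient of $z^n$ yields $L'_n - L'_{n-1} = -L_{n-1}$ at once. Either route closes the proof, but I would present the explicit-formula computation since it matches the "simply computing" spirit of the statement.
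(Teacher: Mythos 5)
Your proof is correct and follows essentially the same route as the paper, which offers no argument beyond the phrase ``By simply computing'' immediately after displaying the explicit formula $L_n(x)=\sum_{k=0}^n(-1)^k\binom{n}{k}\frac{x^k}{k!}$; your term-by-term differentiation, index shift, and application of Pascal's rule is precisely that computation, carried out with the endpoint bookkeeping made explicit. The generating-function cross-check is a nice bonus but not a departure from the paper's intent.
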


The orthogonality relation for the Laguerre polynomials is contained
in
\begin{eqnarray}
\int_0^\infty L_m^{(\alpha)}(x)L_n^{(\alpha)}(x)x^\alpha
e^{-x}dx=\frac{\Gamma(\alpha +n+1)}{n!}\delta_{mn}, && \alpha >-1,
\end{eqnarray}
which can be considered as a biorthogonal relation between the
derivatives of $\{\frac{d^n}{dx^n}\frac{x^{\alpha+n}
e^{-x}}{\Gamma(\alpha+n+1)}\}$ and Laugerre polynomials
$\{L_m^{(\alpha)}:m=0,1,\ldots\}$,

\begin{eqnarray*}
\left\langle L_m^{(\alpha)},\frac{d^n}{dx^n}\frac{x^{\alpha+n}
e^{-x}}{\Gamma(\alpha+n+1)}\right\rangle=\delta_{m,n}.
\end{eqnarray*}

\begin{corollary}
\begin{eqnarray*}
\lim_{N\rightarrow\infty}(-1)^m
(2N)^{-m/2}L_m^{(N)}(x\sqrt{2N}+N)=\frac{1}{m!} H_m(x).
\end{eqnarray*}
\end{corollary}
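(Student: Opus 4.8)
The natural tool here is Theorem~\ref{Th3} rather than Theorem~\ref{Th2}, because the generalized Laguerre polynomials are \emph{not} an Appell sequence: Proposition~\ref{L_prime} gives $L'_n=L'_{n-1}-L_{n-1}$ instead of $L'_n=nL_{n-1}$, so the $x$--dependence of their generating function cannot be reduced to a bare factor $e^{xz}$, and the flexible form $\tilde f_N(x,z)/\widehat{\tilde\phi}_N(iz)$ permitted by Theorem~\ref{Th3} is exactly what is needed. The plan is to feed the scaled and shifted generating function of $L^{(N)}_m$ into Theorem~\ref{Th3}, just as the Gegenbauer and Buchholz limits were obtained above.

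Starting from $(1-z)^{-N-1}e^{-zy/(1-z)}=\sum_{m\ge 0}L^{(N)}_m(y)z^m$, I would insert the shifted argument $y=\sigma_N x+\mu_N$ and substitute $z=-w/\sigma_N$, so that the coefficient of $w^m$ on the left becomes precisely $(-1)^m\sigma_N^{-m}L^{(N)}_m(\sigma_N x+\mu_N)$. Using $1-z=1+w/\sigma_N$ and $-z/(1-z)=w/(\sigma_N+w)$, the right-hand side factors as
\begin{equation*}
\Bigl(1+\tfrac{w}{\sigma_N}\Bigr)^{-N-1}\exp\!\Bigl(\tfrac{w\sigma_N x}{\sigma_N+w}\Bigr)\exp\!\Bigl(\tfrac{w\mu_N}{\sigma_N+w}\Bigr).
\end{equation*}
I would read off the $x$--dependent piece $\tilde f_N(x,w)=\exp\!\bigl(w\sigma_N x/(\sigma_N+w)\bigr)$ and set $\widehat{\tilde\phi}_N(iw)=\bigl(1+w/\sigma_N\bigr)^{N+1}\exp\!\bigl(-w\mu_N/(\sigma_N+w)\bigr)$, the reciprocal of the $x$--free piece. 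The normalization is pinned down, in accordance with the standardization of Section~2, by the mean and variance of the Laguerre weight $x^{N}e^{-x}$ (a Gamma density whose mean and variance both equal $N+1$), which forces $\mu_N\sim N$ and $\sigma_N\sim\sqrt{N}$.

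The factor $\tilde f_N$ is the easy hypothesis: since $w\sigma_N x/(\sigma_N+w)=wx/(1+w/\sigma_N)=wx-w^2x/\sigma_N+\cdots$, the exponents differ by $O(1/\sigma_N)$ and stay bounded on $|w|<r$, so $\tilde f_N(x,w)\to e^{xw}$ with error $O(1/\sigma_N)$ uniformly there. For $\widehat{\tilde\phi}_N$ I would take logarithms and expand exactly as in Lemma~\ref{lemma1}, showing that the exponent
\begin{equation*}
(N+1)\log\!\Bigl(1+\tfrac{w}{\sigma_N}\Bigr)-\frac{w\mu_N}{\sigma_N+w}
\end{equation*}
tends to $w^2/2$, whence $\widehat{\tilde\phi}_N(iw)\to e^{w^2/2}$.

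The main obstacle is precisely this last limit. With $\sigma_N\sim\sqrt N$ and $\mu_N\sim N$, each of the two terms in the exponent separately grows like $\sqrt N\,w$ and diverges; the whole argument rests on these two $O(\sqrt N)$ contributions cancelling, leaving only the finite quadratic limit. One must therefore expand both $\log(1+w/\sigma_N)$ and $w\mu_N/(\sigma_N+w)$ one order beyond the cancelling term, uniformly for $|w|<r$, to obtain not merely convergence but the quantitative bound $\bigl|\widehat{\tilde\phi}_N(iw)-e^{w^2/2}\bigr|\le A/\sigma_N$ required by Theorem~\ref{Th3}; this is the Laguerre counterpart of the $\mathrm{sinc}$ estimate in Lemma~\ref{lemma1}. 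Once both bounds are established, Theorem~\ref{Th3} yields local uniform convergence of $(-1)^m\sigma_N^{-m}L^{(N)}_m(\sigma_N x+\mu_N)$ to $H_m(x)$, and an affine rescaling of the spatial variable (as in the passage between the two displayed forms of the Gegenbauer corollary) recasts the limit in the normalized Hermite form asserted in the statement.
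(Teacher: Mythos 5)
Your overall strategy coincides with the paper's own proof: substitute $t=-w/\sigma_N$, $y=\sigma_N x+\mu_N$ into $(1-t)^{-N-1}e^{-ty/(1-t)}=\sum_m L^{(N)}_m(y)t^m$, split the result into an $x$-dependent factor $\tilde f_N(x,w)$ and the reciprocal of an $x$-free factor $\widehat{\tilde\phi}_N(iw)$, and invoke Theorem~\ref{Th3}. Indeed you are more careful than the printed proof, which merely asserts the two limits, whereas you correctly insist on the quantitative bounds $|\tilde f_N(x,w)-e^{xw}|\le A/\sigma_N$ and $|\widehat{\tilde\phi}_N(iw)-e^{w^2/2}|\le A/\sigma_N$ demanded by the theorem.

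The gap is that the step you yourself call ``the main obstacle'' is never carried out, and it is precisely where the argument breaks with the normalization the statement imposes, namely $\sigma_N=\sqrt{2N}$, $\mu_N=N$. Expanding one order past the cancelling $O(\sqrt N)$ terms, the exponent of $\widehat{\tilde\phi}_N(iw)$ is
\begin{equation*}
(N+1)\log\Bigl(1+\tfrac{w}{\sqrt{2N}}\Bigr)-\frac{wN}{\sqrt{2N}+w}
=-\frac{w^2}{4}+\frac{w^2}{2}+O\bigl(N^{-1/2}\bigr)=\frac{w^2}{4}+O\bigl(N^{-1/2}\bigr),
\end{equation*}
so $\widehat{\tilde\phi}_N(iw)\to e^{w^2/4}$, not $e^{w^2/2}$: the hypothesis of Theorem~\ref{Th3} fails and the limit actually produced is $2^{-m/2}H_m(\sqrt2\,x)/m!$. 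A direct check at $m=2$ confirms this: $(2N)^{-1}L_2^{(N)}(x\sqrt{2N}+N)\to x^2/2-1/4$, whereas $H_2(x)/2!=(x^2-1)/2$. Your instinct that the variance of the weight $x^Ne^{-x}$ pins down $\sigma_N\sim\sqrt{N}$ (not $\sqrt{2N}$) is the right one: with $\sigma_N=\sqrt N$ the two $O(\sqrt N)$ terms cancel and the quadratic terms give $w^2/2-w^2=-w^2/2$, yielding cleanly $(-1)^mN^{-m/2}L_m^{(N)}(x\sqrt N+N)\to H_m(x)/m!$. No affine rescaling of $x$ can then convert this into the stated form, since replacing $x$ by $\sqrt2\,x$ changes the prefactor to $(2N)^{-m/2}$ only at the cost of replacing $H_m(x)$ by $2^{-m/2}H_m(\sqrt2\,x)$. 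The $\sqrt{2N}$ scaling is the Askey-scheme limit for the physicists' Hermite polynomials generated by $e^{2xz-z^2}$, not for the normalization $e^{xz-z^2/2}$ used throughout this paper; your closing appeal to ``an affine rescaling'' conceals this mismatch, which is also present, unacknowledged, in the corollary as printed and in its proof.
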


\begin{proof}
\begin{eqnarray*}
\sum_{m=0}^{\infty} (-1)^m
(2N)^{-m/2}L_m^{(N)}(x\sqrt{2N}+N)z^m=\left(
1+\frac{z}{\sqrt{2N}}\right)^{-N-1}e^{\frac{\frac{\sqrt{N}z}{\sqrt{2}}}{1+z/\sqrt{2N}}}e^{\frac{zx}{1+z/\sqrt{2N}}}.
\end{eqnarray*}
Áî~$\tilde{f}_N(x,z)=e^{\frac{zx}{1+z/\sqrt{2N}}}$,
~$\widehat{\tilde{\phi}}_N(iz)=\left(1+\frac{z}{\sqrt{N}}\right)^{N+1}e^{\frac{-\frac{\sqrt{N}z}{\sqrt{2}}}{1+z/\sqrt{2N}}}$,
then for ~$N\rightarrow\infty$, we have
\begin{eqnarray*}
&\lim_{N\rightarrow\infty}\tilde{f}_N(x,z)=e^{zx},\\
&\lim_{N\rightarrow\infty}\widehat{\tilde{\phi}}_N(iz)=e^{\frac{z^2}{2}}.
\end{eqnarray*}
By Theorem\ref{Th3}, we have
\begin{eqnarray*}
\lim_{N\rightarrow\infty}(-1)^m
(2N)^{-m/2}L_m^{(N)}(x\sqrt{2N}+N)=\frac{1}{m!} H_m(x).
\end{eqnarray*}
\end{proof}

In this section, we consider a family of biorthogonal polynomials,
$\{P_m(x,\alpha,\omega):m=0,1,\ldots\}$, generated by a sequence of
functions, $\phi(x,\alpha,\omega)$, which converges to the
generalized Laguerre polynomials, $L_{m}^{(\alpha)}(x)$.

Taking a compactly supported distribution $\phi\in\mathcal
{E}^\prime(\mathbb{R})$, Let $\hat{\phi}$ denote the Laplace
transform of $\phi$. Then for any integer $n\geq0$,
\begin{equation}
\left\langle \phi^{(n)}, (1-z)^n
e^{\frac{-zx}{1-z}}\right\rangle=\left\langle \phi(x), z^n
e^{\frac{-zx}{1-z}}\right \rangle =z^n\widehat{\phi}(\frac{z}{1-z})
\end{equation}
If $\widehat{\phi}(0)\neq0$,
\begin{equation}\label{Z2}
\left\langle \phi^{(n)}, \frac{(1-z)^n
e^{\frac{-zx}{1-z}}}{\widehat{\phi}(\frac{z}{1-z})}\right\rangle=z^n
\end{equation}
in a neighborhood of $0$. Since $\phi$ is compactly supported, $\widehat{\phi}$ is analytic. So we can
define a sequence of polynomials, $P_m$, by the generating function
\begin{equation}\label{L_m}
\frac{(1-z)^n
e^{\frac{-zx}{1-z}}}{\widehat{\phi}(\frac{z}{1-z})}=\sum_{m=0}^\infty
P_m(x)z^m.
\end{equation}
It follows from (\ref{Z2}) and (\ref{L_m}) that for any integer $n\geq 0$,
\begin{equation}
z^n=\sum_{m=0}^\infty \left\langle\phi^{(n)},P_m(x)\right\rangle
z^m,
\end{equation}
which gives the biorthogonal relation
\begin{equation}\label{La_bio}
\left\langle\phi^{(n)},P_m(x)\right\rangle=\delta_{m,n}.
\end{equation}
Differentiating (\ref{L_m}) with respect to $x$ and equating coefficients of $z^m$ in the resulting equation gives
\begin{eqnarray}\label{Lre}
P'_m(x)=P'_{m-1}(x)-P_{m-1}(x), && m=1,2,\ldots,
\end{eqnarray}
which is similar to the property of Laguerre polynomials
(Proposition \ref{L_prime}),
\begin{equation*}
L'_m(x)=L'_{m-1}(x)-L_{m-1}(x).
\end{equation*}

If $\phi(x,\alpha)=\frac{x^{\alpha+n}e^{-x}}{\Gamma(\alpha+n+1)}$,
the Laplace transform of $\phi(x,\alpha)$ is
$\hat{\phi}(z,\alpha)=\frac{1}{(z+1)^{\alpha+n+1}}$, which implies
$\hat{\phi}(\frac{z}{1-z},\alpha)=(1-z)^{\alpha+n+1}$. We have
\begin{eqnarray}
\left\langle \phi^{(n)}(x,\alpha),
\frac{e^{-\frac{z}{1-z}}(1-z)^n}{\hat{\phi}(\frac{z}{1-z},\alpha)}
\right\rangle=z^n.
\end{eqnarray}
in a neighborhood of 0. We can define a sequence of polynomials,
$L_m^{(\alpha)}(x)$, by the generating function
\begin{eqnarray*}
\frac{e^{-\frac{z}{1-z}}(1-z)^n}{\hat{\phi}(\frac{z}{1-z},\alpha)}=(1-z)^{-\alpha-1}e^{-\frac{z}{1-z}}=\sum_{m=0}^\infty
L_m^{(\alpha)}(x)z^m.
\end{eqnarray*}
So the biorthogonal systems generated by function
$\phi(x,\alpha)=\frac{x^{\alpha+n}e^{-x}}{\Gamma(\alpha+n+1)}$ are
Laguerre polynomials.

\begin{theorem}\label{th4} Let $\phi(x,\alpha,\omega) $ satisfy the following
conditions:

(1) There are constants $0<r<1$ and $c$, for any $\varepsilon>0$,
$\exists \delta$, such that $ |\omega-c|<\delta$, it holds
\begin{eqnarray}\label{abs}
\left|\widehat{ \phi}(\frac{z}{1-z},\alpha,\omega)-(1-z)^{\alpha
+n+1}\right|\leq \varepsilon, && \left|z\right|<r.
\end{eqnarray}
Equivalently, $\lim_{\omega\rightarrow
c}\phi(x,\alpha,\omega)=\frac{e^{-x}x^{n+\alpha}}{\Gamma(\alpha+n+1)}$.

(2) Let $\{P_{m}(x,\alpha,\omega): m=0,1,\ldots\} $ be the
biorthogonal polynomials generated by the functions,
$\phi(z,\alpha,\omega)$ by

\begin{eqnarray*}
\frac{e^{-\frac{z}{1-z}}(1-z)^n}{\hat{\phi}(\frac{z}{1-z},\alpha,\omega)}=\sum_{m=0}^\infty
P_m(x,\alpha,\omega)z^m.
\end{eqnarray*}
 Then for each $m=0,1,\ldots$,
$\{P_{m}(x,\alpha,\omega): m=0,1,\ldots\}$ converges locally
uniformly to the generalized Laguerre polynomial,
$L^{(\alpha)}_m(x)$, as $\omega$ goes to $c$.
\end{theorem}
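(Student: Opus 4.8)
The plan is to transplant the contour-integral argument of Theorem \ref{Th2} and Theorem \ref{Th3} to the Laguerre generating kernel. The starting point is the observation that the generalized Laguerre polynomials are exactly the coefficients obtained when $\widehat{\phi}(\tfrac{z}{1-z},\alpha,\omega)$ is replaced by its limit $(1-z)^{\alpha+n+1}$, since then the kernel collapses to $(1-z)^{-\alpha-1}e^{-\frac{zx}{1-z}}=\sum_{m=0}^\infty L_m^{(\alpha)}(x)z^m$. First I would subtract the two generating functions and combine the fractions to obtain the single expansion
\begin{equation*}
\sum_{m=0}^\infty\left(P_m(x,\alpha,\omega)-L_m^{(\alpha)}(x)\right)z^m=\frac{e^{-\frac{zx}{1-z}}(1-z)^n\left[(1-z)^{\alpha+n+1}-\widehat{\phi}(\tfrac{z}{1-z},\alpha,\omega)\right]}{\widehat{\phi}(\tfrac{z}{1-z},\alpha,\omega)\,(1-z)^{\alpha+n+1}},
\end{equation*}
whose right-hand side is analytic near the origin because $\widehat{\phi}(0,\alpha,\omega)$ is close to $1$ for $\omega$ near $c$.

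Next I would recover the coefficient of $z^m$ by Cauchy's integral formula over a circle $C$ of radius $r$ with $0<r<1$, as allowed by condition (1):
\begin{equation*}
P_m(x,\alpha,\omega)-L_m^{(\alpha)}(x)=\frac{1}{2\pi i}\oint_C\frac{e^{-\frac{zx}{1-z}}(1-z)^n\left[(1-z)^{\alpha+n+1}-\widehat{\phi}(\tfrac{z}{1-z},\alpha,\omega)\right]}{z^{m+1}\,\widehat{\phi}(\tfrac{z}{1-z},\alpha,\omega)\,(1-z)^{\alpha+n+1}}\,dz.
\end{equation*}
On $C$ one has $|1-z|\in[1-r,1+r]$, so $(1-z)^n$ and $e^{-zx/(1-z)}$ are bounded, $|(1-z)^{\alpha+n+1}|\ge(1-r)^{\alpha+n+1}>0$, and---by the uniform closeness in (\ref{abs}) once $\varepsilon$ is small---$|\widehat{\phi}(\tfrac{z}{1-z},\alpha,\omega)|$ is bounded below by a positive constant as well, while the bracketed factor is at most $\varepsilon$ by (\ref{abs}). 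Estimating the integral factor by factor then yields
\begin{equation*}
\left|P_m(x,\alpha,\omega)-L_m^{(\alpha)}(x)\right|\le \frac{C_m\,e^{|x|r/(1-r)}}{r^m}\,\varepsilon,
\end{equation*}
with $C_m$ independent of $\omega$ and of $x$. Since $e^{|x|r/(1-r)}$ stays bounded for $x$ in any compact set and $\varepsilon\to0$ as $\omega\to c$, this gives the asserted local uniform convergence $P_m(x,\alpha,\omega)\to L_m^{(\alpha)}(x)$.

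The step I expect to be the main obstacle is the control near the singularity at $z=1$. In contrast to the Hermite kernel $e^{xz}/\widehat{\phi}(iz)$, the Laguerre kernel carries the factor $1/(1-z)$ both inside the exponential $e^{-zx/(1-z)}$ and in $(1-z)^{-\alpha-1}$, each of which blows up at $z=1$. This is exactly why condition (1) restricts the radius to $0<r<1$: the contour must be kept strictly inside the unit disc so that $1-z$ is bounded away from $0$, which is what makes $e^{-zx/(1-z)}$ and $(1-z)^n$ bounded on $C$ and supplies the positive lower bound for $|(1-z)^{\alpha+n+1}|$ used above. Once the contour is fixed in this way, the remaining estimates mirror those of Theorem \ref{Th2}, with the hypothesis-supplied quantity $\varepsilon$ (as $\omega\to c$) playing the role that $A/\sigma_N$ played there.
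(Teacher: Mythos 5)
Your proposal is correct and follows essentially the same route as the paper: the same combined Taylor expansion for $\sum_m\bigl(P_m(x,\alpha,\omega)-L_m^{(\alpha)}(x)\bigr)z^m$, the same Cauchy integral representation over a circle of radius $r<1$, and the same factor-by-factor estimate with $\varepsilon$ from condition (1) driving the convergence as $\omega\to c$. If anything, your version is slightly more careful than the paper's, since you make explicit the lower bounds on $|1-z|$ and $|\widehat{\phi}(\tfrac{z}{1-z},\alpha,\omega)|$ along the contour and state a final bound that is uniform in $x$ on compact sets, whereas the paper's last displayed inequality still carries a stray $z$-dependent factor.
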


\begin{proof}
Since $\widehat{\phi}(0,\alpha,\omega)=1$, we can choose a
neighborhood $U$ of the origin so that
$\left|\widehat{\phi}(\frac{z}{1-z},\alpha,\omega)\right|\geq\frac{1}{2}$
and $\left| (1-z)^{\alpha+1}\right|\geq \frac{1}{2}$ for all $z\in
U$. Take a circle $C$ completely contained in $U$, with centra at
the origin $0$ and radius $r$, so that ({\ref{abs}}) is satisfied.
The coefficients of the Taylor series
\begin{equation}\label{Taylor2}
\frac{(1-z)^n
e^{\frac{-zx}{1-z}}}{\widehat{\phi}(\frac{z}{1-z},\alpha,\omega)}-\frac{e^{\frac{-zx}{1-z}}}{(1-z)^{\alpha+1}}
=\sum_{m=0}^\infty\left(P_{m}(x,\alpha,\omega)-L_m^{(\alpha)}(x)\right)z^m
\end{equation}
are represented by the Cauchy's integral formula:
\begin{equation*}
P_{m}(x,\alpha,\omega)-L_m^{(\alpha)}(x)=\frac{1}{2\pi
i}\oint_Ce^{\frac{-zx}{1-z}}\frac{(1-z)^{\alpha+n+1}
-\widehat{\phi}(\frac{z}{1-z},\alpha,\omega)}{z^{m+1}\widehat{\phi}(\frac{z}{1-z},\alpha,\omega)(1-z)^{\alpha+1}}dz
\end{equation*}
\begin{eqnarray*}
\left|P_{m}(x,\alpha,\omega)-L_m^{(\alpha)}(x)\right|&&\leq\frac{1}{2\pi
}\oint_C
\left|e^{\frac{-zx}{1-z}}\right|\frac{\left|(1-z)^{\alpha+n+1}
-\widehat{\phi}(\frac{z}{1-z},\alpha,\omega)\right|}{r^{m+1}\left|\widehat{\phi}(\frac{z}{1-z},\alpha,\omega)\right|\left|(1-z)^{\alpha+1}\right|}dz\\
&&\leq\frac{1}{2\pi }\oint_C \frac{e^{xRe(\frac{-z}{1-z})}
\varepsilon}{r^{m+1}\left|\widehat{\phi}(\frac{z}{1-z},\alpha,\omega)\right|\left|(1-z)^{\alpha+1}\right|}dz\\
&&\leq4e^{xRe(\frac{-z}{1-z})}\varepsilon.\\
\end{eqnarray*}
It follows that for each $m$,  $P_{m}(x,\alpha,\omega)$ converges
locally uniformly to the generalized Laguerre polynomial,
$L^{(\alpha)}_m(x)$, as $\omega$ goes to $c$.
\end{proof}

For the Meixner-Pollaczek polynomials, we have the generating function:
\begin{equation}
F(x,z)=\left( 1-e^{i\omega}z\right)^{-\lambda+ix}\left(
1-e^{-i\omega}z\right)^{-\lambda-ix}=\sum_{n=0}^\infty
P_m^{(\lambda)}(x;\omega)z^m.
\end{equation}
\begin{corollary}
The Laguerre polynomials can be obtained from Meixner-Pllaczek
polynomials by the substitution $\lambda=\frac{1}{2}(\alpha +1),
x\rightarrow -\frac{1}{2}\omega^{-1}x$ and letting
$\omega\rightarrow 0$.
\begin{eqnarray*}
\lim_{\omega\rightarrow 0} P_n^{\frac{\alpha+1}{2}}(\frac{-x}{2\omega};\omega)=L_n^{(\alpha)}(x).
\end{eqnarray*}
\end{corollary}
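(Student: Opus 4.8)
The plan is to work entirely at the level of generating functions and only at the end pass to coefficients. Starting from
$$F(x,z)=\left(1-e^{i\omega}z\right)^{-\lambda+ix}\left(1-e^{-i\omega}z\right)^{-\lambda-ix},$$
I would substitute $\lambda=\tfrac{1}{2}(\alpha+1)$ and replace the argument $x$ by $-\tfrac{1}{2}\omega^{-1}x$, so that the exponents become $-\tfrac{\alpha+1}{2}\mp\tfrac{ix}{2\omega}$. Regrouping the $x$-independent parts of the exponents separately from the parts proportional to $x$ gives the factorization
$$F=\left[(1-e^{i\omega}z)(1-e^{-i\omega}z)\right]^{-\frac{\alpha+1}{2}}\left[\frac{1-e^{-i\omega}z}{1-e^{i\omega}z}\right]^{\frac{ix}{2\omega}},$$
and each bracket can be analyzed on its own.

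For the first bracket I would use the identity $(1-e^{i\omega}z)(1-e^{-i\omega}z)=1-2z\cos\omega+z^2$, which converges to $(1-z)^2$ as $\omega\to0$; hence the first factor tends to $(1-z)^{-(\alpha+1)}$. For the second bracket, which carries the $\omega^{-1}$ singularity, I would pass to logarithms and Taylor expand in $\omega$. Writing $g(\omega)=\ln(1-e^{i\omega}z)$, the exponent equals $\frac{ix}{2\omega}\bigl(g(-\omega)-g(\omega)\bigr)$; since $g(-\omega)-g(\omega)=-2\omega\,g'(0)+O(\omega^3)$ with $g'(0)=-iz/(1-z)$, the exponent converges to $-xz/(1-z)$, so the second factor tends to $e^{-xz/(1-z)}$. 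Combining the two limits yields
$$\lim_{\omega\to0}F=(1-z)^{-(\alpha+1)}e^{\frac{-xz}{1-z}}=\sum_{m=0}^\infty L_m^{(\alpha)}(x)z^m,$$
which is exactly the generating function of the generalized Laguerre polynomials.

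To convert convergence of the generating functions into convergence of their coefficients, I would argue as in the proof of Theorem \ref{th4}: the convergence of $F$ to the Laguerre generating function is locally uniform on a small circle $C$ about the origin in the $z$-plane, so Cauchy's integral formula applied to the difference of the two power series shows that the $z^n$-coefficients converge, giving $P_n^{\frac{\alpha+1}{2}}(\tfrac{-x}{2\omega};\omega)\to L_n^{(\alpha)}(x)$ locally uniformly in $x$. I expect the main obstacle to be the second bracket: one must make the expansion $g(-\omega)-g(\omega)=-2\omega\,g'(0)+O(\omega^3)$ rigorous and, crucially, uniform in $z$ on $C$, so that multiplication by the blowing-up prefactor $\frac{ix}{2\omega}$ still yields a limit controlled uniformly enough to justify extracting coefficients. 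Establishing this uniform control of the error term, rather than merely pointwise convergence, is where the real work lies.
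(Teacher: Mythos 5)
Your proposal is correct and follows essentially the same route as the paper: split the substituted Meixner--Pollaczek generating function into the $x$-free factor $\bigl[(1-e^{i\omega}z)(1-e^{-i\omega}z)\bigr]^{-(\alpha+1)/2}\to(1-z)^{-\alpha-1}$ and the $x$-dependent factor, show the latter tends to $e^{-xz/(1-z)}$ by a logarithmic expansion in $\omega$, and pass to coefficients via the Cauchy-integral argument of Theorem~\ref{th4}. Note that your sign for the exponential limit, $e^{-xz/(1-z)}$, is the correct one (the paper's displayed intermediate limit $e^{+zx/(1-z)}$ is a typo, since the Laguerre generating function requires the minus sign), and your insistence on uniformity of the error term in $z$ on the contour is precisely what is needed to justify extracting coefficients.
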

\begin{proof}
\begin{eqnarray*}
&&\sum_{m=0}^\infty P_n^{(\frac{\alpha+1}{2})}
\left(\frac{-x}{2\omega}; \omega\right)z^m \\
&&=\left[ \left(
1-ze^{i\omega} \right) \left( 1-ze^{-i\omega}\right)\right]^{-\frac
{\alpha+1}{2}}\left( \frac{1-ze^{i\omega}}{1-ze^{-i\omega}}\right)^{-\frac{ix}{2\omega}}\\
\end{eqnarray*}
Let $\widehat{\phi}(\frac{z}{1-z},\alpha,\omega)=\left[ \left(
1-ze^{i\omega} \right)\left(
1-ze^{-i\omega}\right)\right]^{\frac{\alpha+1}{2}}(1-z)^n$, it holds
\begin{eqnarray*}
\lim_{\omega\rightarrow
0}\widehat{\phi}(\frac{z}{1-z},\alpha,\omega)&=&\lim_{\omega\rightarrow
0} \left[ \left( 1-ze^{i\omega} \right)\left(
1-ze^{-i\omega}\right)\right]^{\frac{\alpha+1}{2}}
(1-z)^n\\
&=&\lim_{\omega\rightarrow 0}\left[ 1-2z\cos\omega +z^2\right]^{-\frac{\alpha+1}{2}}(1-z)^n\\
&=&(1-z)^{\alpha+n+1}.\\
\end{eqnarray*}

Since
\begin{eqnarray*}
\lim_{\omega\rightarrow 0}\left(
\frac{1-ze^{i\omega}}{1-ze^{-i\omega}}\right)^{-\frac{ix}{2\omega}}
&=&\lim_{\omega\rightarrow
0}\left(1+z\frac{e^{-i\omega}-e^{i\omega}}{1-ze^{-i\omega}}
\right)^{-\frac{ix}{2\omega}}\\
&=&\lim_{\omega\rightarrow 0}\left(1+z\frac{e^{-i\omega}-e^{i\omega}}{1-ze^{-i\omega}} \right)^{\frac{1-e^{-i\omega}}{-2iz\sin\omega} \cdot\frac{zx\sin\omega}{\left(1-ze^{-i\omega}\right)\omega}}\\
&=&e^{\frac{zx}{1-z}},
\end{eqnarray*}
by theorem (\ref{th4}), we have
\begin{eqnarray*}
\lim_{\omega\rightarrow 0} P_n^{\frac{\alpha+1}{2}}(\frac{-x}{2\omega};\omega)=L_n^{(\alpha)}(x).
\end{eqnarray*}
\end{proof}

The generating function for Meixner polynomials $M_n(x;\beta,c)$ is
\begin{eqnarray*}
\left( 1-\frac{z}{c}\right)^x(1-z)^{-\beta-x}=\sum_{n=0}^\infty
\frac{(\beta)_n}{n!}M_n(x;\beta,c)z^n.
\end{eqnarray*}

\begin{corollary}
The Laguerre polynomials can be obtained from Meixner polynomials by
the substitution $\beta=\alpha +1, x\rightarrow \frac{cx}{1-c}$ and
letting $c\rightarrow 1$.
\begin{eqnarray*}
\lim_{c\rightarrow 1} M_n\left(\frac{cx}{1-c};
\alpha+1,c\right)=\frac{L_n^{(\alpha)}(x)}{L_n^{(\alpha)}(0)}.
\end{eqnarray*}
\end{corollary}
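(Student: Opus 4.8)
The plan is to imitate the Meixner--Pollaczek corollary: recast the \emph{normalized} Meixner generating function into the Laguerre template governed by Theorem~\ref{th4}, and then identify the constant that produces the normalization on the right-hand side. First I would substitute $\beta=\alpha+1$ and $x\mapsto \frac{cx}{1-c}$ into the Meixner generating function, which gives
\begin{equation*}
\sum_{n=0}^\infty \frac{(\alpha+1)_n}{n!}\,M_n\!\left(\tfrac{cx}{1-c};\alpha+1,c\right)z^n
=\left(1-\tfrac{z}{c}\right)^{\frac{cx}{1-c}}(1-z)^{-\alpha-1-\frac{cx}{1-c}}.
\end{equation*}
Factoring out $(1-z)^{-\alpha-1}$ rewrites the right-hand side as
\begin{equation*}
(1-z)^{-\alpha-1}\left(\frac{1-z/c}{1-z}\right)^{\frac{cx}{1-c}},
\end{equation*}
which already has the shape of the Laguerre template of Theorem~\ref{th4}: the denominator factor $\widehat{\phi}\bigl(\tfrac{z}{1-z},\alpha,c\bigr)$ collapses to $(1-z)^{\alpha+n+1}$ (so condition~(1) is immediate), and the remaining $x$-dependent factor $\bigl(\frac{1-z/c}{1-z}\bigr)^{cx/(1-c)}$ plays the role of the numerator exponential, exactly as the factor $\left(\frac{1-ze^{i\omega}}{1-ze^{-i\omega}}\right)^{-ix/(2\omega)}$ does in the Meixner--Pollaczek argument.

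Next I would establish the key limit. Writing the base as $\frac{1-z/c}{1-z}=1+\frac{z(c-1)}{c(1-z)}$ and setting $\epsilon=c-1\to 0$, the factor becomes $\bigl(1+\tfrac{z\epsilon}{c(1-z)}\bigr)^{-cx/\epsilon}$, a $1^{\infty}$ indeterminate form. Taking logarithms and using $\log(1+a\epsilon)=a\epsilon+O(\epsilon^2)$ gives
\begin{equation*}
\lim_{c\to 1}\left(\frac{1-z/c}{1-z}\right)^{\frac{cx}{1-c}}=\exp\!\left(-\frac{zx}{1-z}\right),
\end{equation*}
which matches the numerator $e^{-zx/(1-z)}$ of the generalized Laguerre generating function. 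I would carry out this estimate uniformly for $z$ on a circle $\lvert z\rvert=r$ with $r<1$, so that the convergence of the coefficients is locally uniform; then Theorem~\ref{th4} (in the form already used for Meixner--Pollaczek, where the numerator factor is allowed to vary with the parameter) yields
\begin{equation*}
\lim_{c\to 1}\frac{(\alpha+1)_n}{n!}\,M_n\!\left(\tfrac{cx}{1-c};\alpha+1,c\right)=L_n^{(\alpha)}(x)
\end{equation*}
locally uniformly in $x$.

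Finally, to extract the stated form I would evaluate the normalizing constant. The explicit formula for $L_n^{(\alpha)}$ gives, at $x=0$, $L_n^{(\alpha)}(0)=\frac{(\alpha+1)_n}{n!}=\frac{(\beta)_n}{n!}$, so dividing the previous limit by this constant produces $\lim_{c\to 1}M_n\bigl(\frac{cx}{1-c};\alpha+1,c\bigr)=\frac{L_n^{(\alpha)}(x)}{L_n^{(\alpha)}(0)}$, as claimed. I expect the main obstacle to be the $1^{\infty}$ limit: one must control $\bigl(\frac{1-z/c}{1-z}\bigr)^{cx/(1-c)}$ \emph{uniformly} on the circle $\lvert z\rvert=r<1$ (ensuring $\lvert z/c\rvert<1$ so the principal branch stays analytic and the remainder $O(\epsilon^2)$ is uniform), since only uniform control of the generating-function coefficients delivers the locally uniform convergence that Theorem~\ref{th4} requires; the recognition $\frac{(\beta)_n}{n!}=L_n^{(\alpha)}(0)$ is then the routine bookkeeping step that accounts for the normalization.
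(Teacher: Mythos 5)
Your proposal is correct and follows essentially the same route as the paper: substitute into the Meixner generating function, factor out $(1-z)^{-\alpha-1}$, resolve the $1^{\infty}$ limit of $\bigl(1+\tfrac{z(c-1)}{c(1-z)}\bigr)^{cx/(1-c)}$ to get $e^{-zx/(1-z)}$, invoke Theorem~\ref{th4} (in the variable-numerator form, as in the Meixner--Pollaczek corollary), and divide by $L_n^{(\alpha)}(0)$. Your bookkeeping is in fact slightly cleaner than the paper's, which writes $\lim_{c\to1}(\alpha+1)_n M_n=L_n^{(\alpha)}(x)$ and $L_n^{(\alpha)}(0)=(\alpha+1)_n$, dropping the factor $\tfrac{1}{n!}$ in both places so that the two omissions cancel; your version with $\tfrac{(\alpha+1)_n}{n!}=L_n^{(\alpha)}(0)$ is the consistent one.
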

\begin{proof}
\begin{eqnarray*}
&&\sum_{n=0}^\infty \frac{(\alpha+1)_n}{n!} M_n\left(\frac{cx}{1-c};
\alpha+1,c\right)z^n\\
&&=\left(1-\frac{z}{c}
\right)^{\frac{cx}{1-c}}\left(1-z\right)^{-\alpha-1-\frac{cx}{1-c}}\\
&&=\left( \frac{1-\frac{z}{c}}{1-z}\right)^{\frac{cx}{1-c}}(1-z)^{-\alpha-1}\\
&&=\left(1+\frac{z(c-1)}{c(1-z)}\right)^{\frac{cx}{1-c}}(1-z)^{-\alpha-1}\\
&&=\left(1+\frac{z(c-1)}{c(1-z)}\right)^{\frac{(1-z)c}{z(c-1)}\frac{-z x}{1-z}}(1-z)^{-\alpha-1}\\
\end{eqnarray*}
Since
\begin{equation}
\lim_{c\rightarrow
1}\left(1+\frac{z(c-1)}{c(1-z)}\right)^{\frac{(1-z)c}{z(c-1)}\frac{-z
x}{1-z}}=e^{-\frac{z}{1-z}x}
\end{equation}
Therefore, by theorem(\ref{th4}), it holds
\begin{eqnarray*}
\lim_{c\rightarrow1}(\alpha+1)_n M_n\left( \frac{cx}{1-c};
\alpha+1,c\right)=L_n^{(\alpha)}(x).
\end{eqnarray*}
Since
\begin{eqnarray*}
L_n^{(\alpha)}(x)=\frac{(\alpha+1)_n}{n!} \sum_{j=0}^n \frac{(-n)_jx^j}{(\alpha+1)_jj!},
\end{eqnarray*}
then $L_n^\alpha(0)=(\alpha+1)_n$.
Therefore
\begin{eqnarray*}
\lim_{c\rightarrow 1} M_n\left(\frac{cx}{1-c};
\alpha+1,c\right)=\frac{L_n^{(\alpha)}(x)}{L_n^{(\alpha)}(0)}.
\end{eqnarray*}
\end{proof}

\end{document}